\newcommand{\Rbb}{\mathbb{R}}
\newcommand{\Hbb}{\mathbb{H}}
\newcommand{\Vbb}{\mathbb{V}}
\newcommand{\Vcal}{\mathcal{V}}
\newcommand{\Ucal}{\mathcal{U}}
\newcommand{\Wcal}{\mathcal{W}}
\newcommand{\rmd}{\,{\rm d}}
\newtheorem{remark}{Remark}[section]
\newtheorem{lemma}{Lemma}[section]
\newtheorem{theorem}{Theorem}[section]
\newtheorem{proposition}{Proposition}[section]
\newtheorem{corollary}{Corollary}[section]
\numberwithin{equation}{section}
\newcommand{\overbar}[1]{\mkern 1.0mu\overline{\mkern-1.0mu#1\mkern-1.0mu}\mkern 1.0mu}
\newcommand{\vertiii}[1]{{\vert\kern-0.25ex\vert\kern-0.25ex\vert #1 
    \vert\kern-0.25ex\vert\kern-0.25ex\vert}}
\begin{document}
\graphicspath{{images/}}

\sloppy 

\pagestyle{myheadings}
\thispagestyle{plain}

\title{Electrode modelling: The effect of contact impedance}

\author{J\'er\'emi Dard\'e and Stratos Staboulis}


\maketitle

\begin{abstract}
The most realistic model for current-to-voltage measurements of electrical impedance tomography is the {\em complete electrode model} which takes into account electrode shapes and contact impedances at the electrode/object interfaces. When contact impedances are small, numerical instability can be avoided by replacing the complete model with the {\em shunt model} in which perfect contacts, that is zero contact impedances, are assumed. In the present work we show that using the shunt model causes only a (almost) linear error with respect to the contact impedances in modelling {\em absolute} current-to-voltage measurements. Moreover, we note that the electric potentials predicted by the two models exhibit genuinely different Sobolev regularity properties. This, in particular, causes different convergence rates for finite element approximation of the potentials. The theoretical results are backed up by two dimensional numerical experiments.
%
\end{abstract}

%

\section{Introduction}\label{sec:intro}

The modelling of current-to-voltage measurements is a fundamental part of {\em Electrical Impedance Tomography (EIT)} in which the aim is to reconstruct information about the conductivity distribution inside a body by external
measurements of electric current and voltage \cite{Barber84,Borcea02,Cheney99,Uhlmann09}. In practice, through a set of surface electrodes, currents of prescribed magnitudes are conducted into the object and the voltages needed for maintaining the currents are recorded. The most accurate mathematical model for a practical current-to-voltage measurement is the {\em complete electrode model} (CEM) which takes into account both the shapes and shunting effect of the electrodes by modelling the electrodes as medium-sized perfect conductors. Moreover, the quality of electrode contacts is described in the CEM by contact impedance parameters which model the effect of the resistive layers present at the electrode/object interfaces. It has been experimentally verified that the CEM is capable of predicting experimental data to better than $.1\%$ \cite{Cheng89,Somersalo92}. 

In {\em absolute} EIT, where conductivity images are computed from fixed-frequency current-to-voltage data measured on an unchanging object, a major challenge is that the measurement is significantly affected by unknown contact impedances. The problem can be tackled e.g.~by applying CEM-based iterative (Newton-type) methods which allow estimating both the conductivity distribution and the contact impedances \cite{Harhanen,Vauhkonen99,Vilhunen02a}. In this technique, a subtlety arises if a physical contact impedance is very close to zero, as numerical approximation of the CEM is known to turn unstable in the limit \cite{Stenberg08}. It has been observed that most numerical instabilities can be avoided by forcing the contact impedance parameters far enough from zero during the iterations \cite{Darde13}. Another tempting approach is to replace the CEM by the so-called {\em shunt model} (SM) which formally corresponds to the CEM with zero contact impedances. To rigorously justify either of the stabilization procedures, it is necessary to demonstrate that the CEM converges controllably to the SM as the contact impedance tends to zero. Proving and analyzing the convergence is the main research problem of this work.


To put our work into perspective, we note that the relationship between the CEM and other widely used idealistic models for EIT measurements --- the {\em continuum model} (CM) and the {\em point electrode podel} (PEM) --- has already been studied \cite{Hanke11,Hyvonen09}. The CM assumes infinite dimensional boundary data and has been used to prove the theoretical stability and uniqueness results on the inverse conductivity problem of EIT. The approximation of CM boundary data in terms of the CEM as the number of electrodes increase was considered in \cite{Hyvonen09}. In the PEM, the electrodes are modelled as point sources and the model has some attractive properties (such as conformal invariance and closed form solutions \cite{Hakula11}). However, due to its inbuilt singularity, the PEM is applicable only in modelling the difference of two electrode measurements. The interpretation of the PEM as a limiting case of the CEM as the electrodes get small was analyzed in \cite{Hanke11}. In conclusion, the work in this paper can be seen as complementing the above results from the point of view of the SM. 

Let us briefly outline the mathematics in this work. In a given object $\Omega\subset \Rbb^n$ with conductivity distribution $\sigma$, applying a static electric field in $\Omega$ induces a potential determined via the conductivity equation 
\begin{equation}\label{eq:condeq}
{\rm div}(\sigma\nabla u) = 0 \qquad {\rm in} \ \ \Omega.
\end{equation}
The effect of contact impedance (resistance) is modelled in the CEM by a positive $z$ in a Robin-type boundary condition
\begin{equation}\label{eq:Rob}
u + z\nu\cdot\sigma\nabla u = U
\end{equation}
on each electrode, with $U\in \Rbb$ representing the voltage perceived by the electrode. As current injection is exclusively confined to the electrodes, one ends up with a mixed zero-Neumann/Robin (NR) boundary value problem for \eqref{eq:condeq}. For fixed boundary data $U$ and $\sigma \equiv 1$, the asymptotics of the solution $u$ as $z$ tends to zero has been previously studied in \cite{Costabel96}; in particular, the limit coincides with the solution of the mixed zero-Neumann/Dirichlet (ND) problem. Unfortunately, the existing theory does not directly imply asymptotics for the solution of the CEM problem because the electrode voltages are not fixed --- they depend on $z$. Identifying the SM with a subspace projection of the CEM allows us to prove convergence in an abstract functional analytical framework. It is noteworthy that the obtained convergence result not only holds for the CEM and SM, but also for their discrete Galerkin approximations. 

To obtain an explicit convergence rate, we resort to the existing regularity theory for elliptic mixed boundary value problems \cite{Costabel96,Hyvonen09,Savare97}. In general, the solutions of NR and ND problems belong at best to $\smash{H^{1+s}(\Omega)}$ for any $s < 1$ (NR) and $\smash{s < \tfrac{1}{2}}$ (ND), respectively. We show that given this Sobolev regularity, the $\smash{H^1(\Omega)}$-error between the CEM and SM solutons is of order $O(z^s)$ with an arbitrary $s<\tfrac{1}{2}$\footnote{In \cite{Costabel96} the exponent $s = \tfrac{1}{2}$ is shown to be optimal for a generic two dimensional problem for Laplacian with fixed boundary data \eqref{eq:Rob}.}. Interestingly, the subspace projection property of the SM yields a better --- of order $O(z^{s})$ for any $s < 1$ --- rate for the electrode voltage. In other words, if the CEM is replaced by the SM, the error in the practical electrode measurement data exhibits almost linear dependence on the contact impedance. Finally, as a side-product, we point out that the same argument is also applicable to the finite element (FE) approximation of the CEM by piecewise linear polynomials: we show that the electrode potential $U$ is ``$\log$ two times'' more accurately approximated than the spatial potential $u$. 

The theoretical results are complemented with two numerical experiments in the plane. In the first one, we probe the convergence with respect to $z$. The anticipated convergence rates are detected with a reasonable success. In particular, the differing rates for $u$ and $U$ are observed.  
In the second numerical experiment, the convergence rates (with respect to the mesh parameter $h$) of the FE approximations of the CEM and SM are tracked. The results indicate that the corresponding rates are indeed different for $u$ and $U$. Moreover, the dissolution of the regularity of $u$ as $z$ tends to zero is shown to cause a slowing $h$-FE convergence rate for the CEM --- an analogous phenomenon is well-known in numerical analysis \cite{Costabel96b}. The regularizing effect of the contact impedance has practical significance as one usually aims for good electrode contacts whereas many reconstruction algorithms rely on repetitive accurate applications of FE-based solvers \cite{Darde13,Heikkinen02,Vauhkonen99}. 

The article is organized as follows. In Sec.~\ref{sec:model} we give the precise mathematical definition of the CEM together with the relevant notation. In the beginning of Sec.~\ref{sec:Gal} the SM is formulated as a subspace projection of CEM, and it is proven that the CEM potentials or their Galerkin approximations converge with an unspecified rate to the SM counterparts as the contact resistance tends to zero. The convergence rates between the models (in smooth geometry) are then derived in Sec.~\ref{sec:Con}. The obtained convergence rate as well as the effect of contact impedance to the convergence of a FE approximation are numerically studied in Sec.~\ref{sec:Num}. Finally, the concluding remarks are presented in Sec.~\ref{sec:Con}.

\section{Complete electrode model}\label{sec:model}

 Let $\Omega\subset\Rbb^n$, $n=2,3$ be a bounded domain (open and simply connected) with Lipschitz regular boundary $\partial\Omega$. The areas covered by electrodes are modelled by $M\geq 2$ mutually disjoint, well-separated subdomains $\{E_m\}_{m=1}^M$ on $ \partial\Omega $ and their union is abbreviated by $ E $.
 The conductivity $ \sigma\colon \Omega \to \Rbb^{n\times n} $ is assumed to be symmetric and such that there are constants $ \sigma_\pm > 0 $ satisfying
\begin{equation}\label{eq:sigma} \sigma_-|\xi|^2 \leq \xi^{\rm T}\sigma \xi \leq \sigma_+ |\xi|^2
\end{equation}
for all $ \xi\in\Rbb^n $ almost everywhere in $ \Omega $, that is to say, $\sigma$ is (possibly) anisotropic and somewhere between an ideal conductor and resistor. We assume that all electrodes are used for both current injection and voltage measurement, and we denote the amplitudes of the static net currents and voltage patterns by $I, U \in \Rbb^M$, respectively. According to the current conservation law, in the absence of sinks and sources we have the necessary condition
\[
I \in \Rbb_\diamond^M := \Bigg\{V\in \Rbb^M \ \colon \ \sum_{m=1}^M V_m = 0\Bigg\}.
\]
The contact impedances at the electrode/object interfaces are are modelled by positive real numbers 
\begin{equation}\label{eq:zzz}
0 < z_- \leq z_m \leq z_+, \qquad m=1,2,\ldots, M.
\end{equation}

\begin{remark}\label{rem:real-complex}
A time-harmonic current input would require taking the reactance into account, i.e., assuming $\sigma, z$ as complex valued such that the real parts satisfy \eqref{eq:sigma} and \eqref{eq:zzz}, respectively (see e.g.~\cite{Somersalo92}). With the suitable modifications, most of the results in this paper can be generalized to the complex case.
\end{remark}

The boundary value problem corresponding to the CEM is as follows: given an input current $ I\in\Rbb_\diamond^M $, find the induced potential pair 
\[ 
\Ucal := (u,U) \in H^1(\Omega)\oplus \Rbb_\diamond^M =: \mathbb{H}^1 
\] 
that satisfies weakly
\vspace{0.1cm}
\begin{equation}
\label{eq:mem}
\begin{array}{ll}
\displaystyle{{\rm div}(\sigma\nabla u) = 0 \qquad}  &{\rm in}\;\; \Omega, \\[6pt] 
{\displaystyle{\nu\cdot\sigma\nabla u} = 0 }\qquad &{\rm on}\;\;\partial\Omega\setminus{E},\\[6pt] 
{\displaystyle u+z_m{\nu\cdot\sigma\nabla u} = U_m } \qquad &{\rm on}\;\; E_m, \\[2pt] 
{\displaystyle \int_{E_m}\nu\cdot\sigma\nabla u \rmd S = I_m}, \qquad & m=1,2,\ldots M, \\[4pt]
\end{array}
\vspace{0.1cm}
\end{equation}
where $\nu:\partial\Omega \to \Rbb^n$ is the exterior unit normal of $\partial\Omega$. Note that since in practice only potential differences can be measured, we determine the ground level by fixing $U \in \Rbb_\diamond^M$. Furthermore, here we have implicitly assumed $\sigma$ to be smooth enough so that all the above objects have a meaning in the sense of traces. For a thorough physical justification of \eqref{eq:mem} the reader is advised to consult e.g.~\cite{Borcea02,Cheng89}.

Unique solvability of \eqref{eq:mem} in the Hilbert space $ \mathbb{H}^1 $ is a consequence of the Lax--Milgram lemma \cite{Dautray88}. The associated bilinear form $ B = B(\sigma,z) \colon \Hbb^1 \times \Hbb^1 \to \Rbb $ is defined by \cite{Somersalo92}
\begin{equation}\label{eq:sesq}
B(\Vcal,\Wcal) = \int_\Omega \sigma\nabla v\cdot\nabla {w} \rmd x + \sum_{m=1}^M\frac{1}{z_m}\int_{E_m}(v-V_m)({w}-{W_m})\rmd S
\end{equation} 
where the boundary restrictions of the appearing functions are identified with their corresponding traces. Thanks to the zero-mean condition on the second component of $ \Hbb^1 $, $ B $ is bounded and coercive \cite{Somersalo92} with respect to the norm $ \|\cdot\| $ induced by the natural scalar product 
\begin{equation}\label{eq:natural}
\big( \Vcal,\Wcal\big)_{H^1(\Omega)\times \Rbb^M} = \int_\Omega (\nabla v \cdot \nabla w + vw )\rmd x + V\cdot {W}.
\end{equation}
Consequently we have the following.
\begin{lemma}\label{lem:Lax-Milgram}
For an arbitrary $\phi \in (\Hbb^1)'$ there is a unique $\Ucal = \Ucal(\phi)\in\Hbb^1$ solving
\begin{equation}\label{eq:weak-mem}
B(\Ucal,\Wcal) = \phi(\Wcal), \qquad \forall \, \Wcal \in \Hbb^1
\end{equation} Moreover, the solution satisfies
\begin{equation}\label{eq:u-unibound}
\|\Ucal\| \leq C \max(\sigma^{-1}_-,z_+) \|\phi\|_{(\Hbb^1)'},
\end{equation}
where the constant $C > 0$ depends only on the geometry.
\end{lemma}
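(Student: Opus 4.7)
The plan is to deduce the lemma directly from the Lax--Milgram lemma applied to the bilinear form $B$ on the Hilbert space $\Hbb^1$, tracking the coercivity constant carefully so that the stated dependence on $\sigma_-$ and $z_+$ falls out.

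First I would verify that $B$ is bilinear and bounded on $\Hbb^1 \times \Hbb^1$. Bilinearity is immediate from the definition \eqref{eq:sesq}. For boundedness, Cauchy--Schwarz gives
\[
|B(\Vcal,\Wcal)| \le \sigma_+ \|\nabla v\|_{L^2(\Omega)}\|\nabla w\|_{L^2(\Omega)} + \frac{1}{z_-}\sum_{m=1}^M \|v-V_m\|_{L^2(E_m)}\|w-W_m\|_{L^2(E_m)},
\]
and the trace theorem $H^1(\Omega) \hookrightarrow L^2(\partial\Omega)$ bounds each boundary term by a multiple of $\|\Vcal\|\,\|\Wcal\|$.

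The main work is coercivity. Using \eqref{eq:sigma} and \eqref{eq:zzz} one obtains
\[
B(\Vcal,\Vcal) \ge \sigma_-\|\nabla v\|_{L^2(\Omega)}^2 + \frac{1}{z_+}\sum_{m=1}^M \|v-V_m\|_{L^2(E_m)}^2 \ge \min\!\bigl(\sigma_-,z_+^{-1}\bigr)\, q(\Vcal),
\]
where $q(\Vcal) := \|\nabla v\|_{L^2(\Omega)}^2 + \sum_m \|v-V_m\|_{L^2(E_m)}^2$. The key non-trivial ingredient, which I would invoke from \cite{Somersalo92}, is that $q$ is equivalent to $\|\cdot\|^2$ on $\Hbb^1$: there exists a geometry-dependent $c>0$ such that
\[
q(\Vcal) \ge c\,\|\Vcal\|^2, \qquad \forall\,\Vcal \in \Hbb^1.
\]
The reason this holds (and fails without the zero-mean constraint on $V$) is a Poincar\'e-type argument: $\|v\|_{L^2(\Omega)}^2$ is controlled by $\|\nabla v\|_{L^2(\Omega)}^2$ plus the square of any one electrode mean $\langle v\rangle_{E_m}$, each mean is controlled by $V_m$ plus $\|v-V_m\|_{L^2(E_m)}$, and the ground-level condition $\sum_m V_m=0$ prevents a non-zero constant $V$ from lying in the kernel. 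Together these give $B(\Vcal,\Vcal) \ge c\min(\sigma_-,z_+^{-1})\|\Vcal\|^2$, which is exactly $c/\max(\sigma_-^{-1},z_+)\,\|\Vcal\|^2$.

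Finally, the Lax--Milgram lemma applied to $B$ with right-hand side $\phi \in (\Hbb^1)'$ yields a unique $\Ucal \in \Hbb^1$ satisfying \eqref{eq:weak-mem}, together with the a priori estimate $\|\Ucal\| \le (\text{coercivity constant})^{-1}\|\phi\|_{(\Hbb^1)'}$, which is precisely \eqref{eq:u-unibound}. The only delicate point is the Poincar\'e-type inequality behind the coercivity bound; since this is already proved in \cite{Somersalo92} I would simply cite it rather than reproduce the argument.
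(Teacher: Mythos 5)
Your proposal is correct and follows exactly the route the paper intends: the paper itself gives no detailed proof, merely noting that $B$ is bounded and coercive on $\Hbb^1$ (citing \cite{Somersalo92} for the norm-equivalence that hinges on the zero-mean condition) and invoking the Lax--Milgram lemma. Your careful tracking of the coercivity constant $\min(\sigma_-,z_+^{-1})=1/\max(\sigma_-^{-1},z_+)$ correctly accounts for the explicit dependence in \eqref{eq:u-unibound}.
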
 
\begin{remark}
Throughout the article, $C> 0$ is denotes generic constants that are independent of $z$ and that may change from one occasion to the next.
\end{remark}

Let us next return to the boundary value problem \eqref{eq:mem}. In order to have a weak version of the co-normal derivative in hand, we set 
\begin{equation}\label{eq:HD}
H_\sigma^s(\Omega):=\{v\in H^s(\Omega) : {\rm div}(\sigma\nabla v)\in L^2(\Omega)\}, \qquad s\in \Rbb,
\end{equation} 
and equip $H_\sigma^s(\Omega)$ with the graph norm 
\[ 
\|\cdot\|_{H^s_\sigma(\Omega)} := \|\cdot\|_{H^s(\Omega)}^2+\|{\rm div}(\sigma\nabla \cdot)\|_{L^2(\Omega)}^2. 
\]
The weak co-normal derivative is defined by $\gamma_1(\sigma) \colon H_\sigma^1(\Omega) \to H^{-1/2}(\partial\Omega) $ through
\begin{equation}
\label{eq:gamma}
\langle \gamma_1(\sigma)v, {w}\rangle = \int_{\Omega}\sigma\nabla v \cdot\nabla {w}\rmd x + \int_\Omega {\rm div}(\sigma\nabla v) {w} \rmd x.
\end{equation}
In the above formula $w\in H^{1/2}(\partial\Omega)$ is identified with its arbitrary $H^1(\Omega)$-extension because the right hand side is defined only up to addition of an $\smash{H^1_0(\Omega)}$-function to $w$. Indeed, this follows by density since the weak (distributional) definition of the differential operator is 
\[
\langle{\rm div}(\sigma\nabla v), {\varphi}\rangle = -\int_\Omega \sigma\nabla v \cdot\nabla{\varphi} \rmd x
\]
for $\varphi \in \mathscr{C}_0^\infty(\Omega)$ and $H_0^1(\Omega) = \overbar{\mathscr{C}_0^\infty(\Omega)}^{H^1(\Omega)}$. Using Green's formula, it is easy to check that $\gamma_1(\sigma)$ coincides with the standard conormal derivative for smooth enough functions and boundaries.

We conclude this section by observing the connection between \eqref{eq:mem} and \eqref{eq:weak-mem}.
\begin{theorem} \label{thm:weak_strong_sm}
Let $\Ucal = \Ucal(\phi_I)\in\Hbb^1$ be the solution of (\ref{eq:weak-mem}) with the second-member
 $\phi_I(\Wcal) := I\cdot{W}$. Then \ $\Ucal = (u,U)$ satisfies \eqref{eq:mem} with the co-normal derivative replaced with $\gamma_1(\sigma)u$. The converse statement also holds.
\end{theorem}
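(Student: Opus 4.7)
The plan is to establish the equivalence by testing the weak formulation against specially chosen elements of $\Hbb^1$ and reading off each piece of \eqref{eq:mem} in turn, then inverting the argument for the converse direction via integration by parts.

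First, for the forward direction, starting from $\Ucal = (u,U)$ solving $B(\Ucal,\Wcal) = I\cdot W$ for all $\Wcal \in \Hbb^1$, I would extract the interior PDE by testing with $\Wcal = (\varphi,0)$ for $\varphi \in \mathscr{C}_0^\infty(\Omega)$. Since $\varphi$ vanishes on $\partial \Omega$, the Robin-type boundary sum drops out and $\int_\Omega \sigma\nabla u\cdot\nabla \varphi \rmd x = 0$, which is exactly the distributional statement ${\rm div}(\sigma\nabla u) = 0$ in $\Omega$. In particular $u\in H^1_\sigma(\Omega)$, so the weak co-normal derivative $\gamma_1(\sigma)u\in H^{-1/2}(\partial\Omega)$ is well-defined and, by \eqref{eq:gamma}, $\int_\Omega \sigma\nabla u\cdot\nabla w \rmd x = \langle \gamma_1(\sigma)u, w\rangle$ for all $w \in H^1(\Omega)$.

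Next, I would test against $\Wcal = (w,0)$ for arbitrary $w\in H^1(\Omega)$ to obtain
\[
\langle \gamma_1(\sigma)u, w\rangle + \sum_{m=1}^M \frac{1}{z_m}\int_{E_m}(u-U_m)\,w \rmd S = 0.
\]
Localizing $w$ away from $E$ yields $\gamma_1(\sigma)u = 0$ on $\partial\Omega\setminus E$, while localizing $w$ near a single electrode $E_m$ gives the Robin identity $u + z_m\gamma_1(\sigma)u = U_m$ on $E_m$ in $H^{-1/2}$ sense. For the current condition I would test with $\Wcal = (0,V)$, $V \in \Rbb_\diamond^M$, producing $\sum_m V_m \int_{E_m} \gamma_1(\sigma)u \rmd S = I\cdot V$ after inserting the Robin identity. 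This shows the vector $a := \bigl(\int_{E_m}\gamma_1(\sigma)u\rmd S\bigr)_m$ satisfies $(a-I)\perp \Rbb_\diamond^M$, hence $a = I + c(1,\ldots,1)$ for some $c\in\Rbb$. Pinning down $c$ is the only delicate point: integrating the PDE against the constant function $1 \in H^1(\Omega)$ via \eqref{eq:gamma} gives $\sum_m a_m = \langle \gamma_1(\sigma)u, 1\rangle = 0$, and combining with $I\in\Rbb_\diamond^M$ forces $c = 0$, i.e.\ $\int_{E_m}\gamma_1(\sigma)u\rmd S = I_m$.

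For the converse, I would start from a pair $\Ucal=(u,U)$ satisfying the (rewritten) system \eqref{eq:mem} in the weak sense, with ${\rm div}(\sigma\nabla u)\in L^2(\Omega)$ and $u\in H^1_\sigma(\Omega)$. Plugging the Robin relation $\frac{1}{z_m}(u-U_m) = -\gamma_1(\sigma)u$ on $E_m$ and the Neumann condition $\gamma_1(\sigma)u=0$ on $\partial\Omega\setminus E$ into \eqref{eq:sesq}, and using \eqref{eq:gamma} together with ${\rm div}(\sigma\nabla u) = 0$, a short computation reduces $B(\Ucal,\Wcal)$ to $-\sum_m W_m\int_{E_m}\gamma_1(\sigma)u\rmd S = \sum_m I_m W_m = \phi_I(\Wcal)$ for every $\Wcal=(w,W)\in\Hbb^1$. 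I expect the main (mild) obstacle to be the bookkeeping around $\Rbb_\diamond^M$ in the current-integral step, which has to be resolved via the orthogonal-complement argument and the compatibility identity $\sum_m\int_{E_m}\gamma_1(\sigma)u\rmd S = 0$ sketched above; everything else is a direct application of the definitions.
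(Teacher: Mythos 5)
Your proof is correct and follows the standard weak-to-strong equivalence argument; the paper itself gives no details, deferring entirely to \cite{Somersalo92}, and your write-up is essentially that reference's reasoning, with the only genuinely delicate point --- pinning down the constant $c$ in the current integrals via $\langle\gamma_1(\sigma)u,1\rangle=0$ and $I\in\Rbb_\diamond^M$ --- handled properly. The one blemish is a sign slip in the converse direction: substituting $\tfrac{1}{z_m}(u-U_m)=-\gamma_1(\sigma)u$ into \eqref{eq:sesq} reduces $B(\Ucal,\Wcal)$ to $+\sum_{m} W_m\int_{E_m}\gamma_1(\sigma)u\rmd S$, not its negative, and it is this quantity that equals $I\cdot W$.
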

\begin{proof}
The essentials of the proof can be found in \cite{Somersalo92}.
\end{proof}

Next we focus on the behaviour of $\Ucal$ when the contact impedances tend to zero. A natural candidate
for the limit is the solution of the SM, which roughly correspond to 
the CEM problem \eqref{eq:mem} with vanishing contact impedances.

\section{Convergence to the shunt model}\label{sec:Gal}

\subsection{Shunt model}

In EIT, the SM models the idealistic case of perfect conduction between the body and the electrodes. Mathematically this corresponds to replacing the Robin condition in \eqref{eq:mem} by the Dirichlet condition corresponding to $z_m = 0$. In fact, this modification
causes a drop of a half Sobolev smoothness index in the solution (see next
section for the details). As a consequence, the SM has a slightly more complicated definition than the CEM; in particular, the last equation of \eqref{eq:mem} does not hold anymore as a standard integral.
For this reason, we focus on the variational formulation of the SM, equivalent to the standard formulation, but easier to handle and sufficient for our purposes.

For any closed subspace $\Vbb\subset \Hbb^1$, the Lax--Milgram lemma (cf. Lemma \ref{lem:Lax-Milgram}) guarantees the existence of a unique element $ \Ucal_{\Vbb} = \Ucal_\Vbb(\phi) \in \Vbb $ satisfying
\begin{equation}\label{eq:Galerkin}
B(\Ucal_{\Vbb},\Wcal) = \phi(\Wcal) \qquad \forall \ \Wcal\in\Vbb,
\end{equation}
for $\phi\in \Vbb'$. Note that $\Ucal_\Vbb$ can be interpreted as the $B$-orthogonal projection of $\Ucal$ onto the closed subspace $\Vbb$. It turns out that the solution of the SM problem is precisely the solution of \eqref{eq:Galerkin}, denoted by $\Ucal_0(\phi_I)$, with 
\begin{equation}\label{eq:H0}
\Vbb = \Hbb_0^1 := \{\Wcal\in\Hbb^1 \ : \ w|_{E_m} = W_m, \ m = 1,2,\ldots,M\}
\end{equation}
and $\phi = \phi_I$ from Theorem \ref{thm:weak_strong_sm}. Without going into details, we emphasize that this can, in essence, be shown by following the same lines of reasoning as in the proof of Theorem \ref{thm:weak_strong_sm}. However, we remind the reader that the lack of regularity
of $\Ucal_0(\phi_I)$ gives rise to the need for understanding the net current boundary condition in a weaker sense (see \eqref{eq:I} for the case of smooth geometry). Notice also that $\Ucal_0$ is independent of $z$ although $B$ is not.

\subsection{Convergence result}

For the rest of this section, we are interested in the existence and characterization of the limit of the solution corresponding to the CEM when
the contact impedances $\left\lbrace z_m\right\rbrace_{m=1}^M$ tend to zero on all electrodes.
We actually prove a slightly more general result:
For any given closed subspace $\smash{\Vbb \subset \Hbb^1}$,  $\Ucal_\Vbb $ always converges as $ z $ tends to zero, and its limit is $\Ucal_{\Vbb_0}\in \Vbb_0$, where 
\begin{equation}\label{eq:V0}
\Vbb_0 := \Vbb \cap \Hbb^1_0 = \{\Wcal = (w,W)\in \Vbb \ \colon \ w|_{E_m} = W_m, \ m=1,2,\ldots,M\}.
\end{equation}

\begin{proposition}\label{prop:zGal}
Let $ \Vbb\subset \Hbb^1 $ be a closed subspace and $\phi\in\Vbb'$. With  $ \Ucal_{\Vbb} = \Ucal_\Vbb(\phi) \in \Vbb$ and $ \Ucal_{\Vbb_0} = \Ucal_{\Vbb_0}(\phi)\in \Vbb_0 $ being defined as in \eqref{eq:Galerkin}, we have
\begin{equation}\label{eq:zlimit}
 \lim_{z\to 0}\Ucal_{\Vbb} = \Ucal_{\Vbb_0} 
\end{equation}
in the space $\Hbb^1$.
\end{proposition}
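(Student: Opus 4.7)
The plan is to prove \eqref{eq:zlimit} by a standard weak-compactness argument, with the boundary sum in \eqref{eq:sesq} supplying the $z$-dependent coercivity that drives everything. First I derive two a priori estimates by testing \eqref{eq:Galerkin} against $\Wcal = \Ucal_\Vbb$, which gives
\[
\int_\Omega \sigma|\nabla u|^2\rmd x + \sum_{m=1}^M \frac{1}{z_m}\|u-U_m\|^2_{L^2(E_m)} = \phi(\Ucal_\Vbb).
\]
Combined with \eqref{eq:sigma} and the standard Poincar\'e-type inequality on $\Hbb^1$ used in \cite{Somersalo92} to establish coercivity of $B$, the left-hand side controls $\|\Ucal_\Vbb\|^2$ with a constant independent of $z$ once $z_+$ is small enough. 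This simultaneously yields a uniform $\Hbb^1$-bound on $\Ucal_\Vbb$ and the crucial boundary estimate $\sum_m \|u-U_m\|^2_{L^2(E_m)} = O(z_+)$.

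Second, from any null sequence $z^{(n)}\to 0$ I extract a subsequence along which $\Ucal_{\Vbb,n}\rightharpoonup \Ucal^{\ast}$ weakly in $\Hbb^1$, with $\Ucal^{\ast}\in \Vbb$ since $\Vbb$ is weakly closed. Continuity (hence weak continuity) of the trace $H^1(\Omega)\to L^2(\partial\Omega)$, combined with weak lower semicontinuity of the $L^2$-norm and the boundary estimate above, forces $u^{\ast}|_{E_m}=U^{\ast}_m$ for each $m$, placing $\Ucal^{\ast}$ in $\Vbb_0$.

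Third, I identify $\Ucal^{\ast}$ as $\Ucal_{\Vbb_0}$. For any $\Wcal\in\Vbb_0$ the boundary sum in $B(\Ucal_{\Vbb,n},\Wcal)$ vanishes because $w|_{E_m}=W_m$, so \eqref{eq:Galerkin} collapses to $\int_\Omega \sigma \nabla u_n\cdot\nabla w\,\rmd x = \phi(\Wcal)$, a relation that passes to the weak limit directly. Uniqueness of the Lax--Milgram problem on $\Vbb_0$ (Lemma \ref{lem:Lax-Milgram} applied to $\Vbb_0$) then pins down $\Ucal^{\ast} = \Ucal_{\Vbb_0}$; since the limit does not depend on the extracted subsequence, the full family converges weakly.

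Finally, to upgrade to strong convergence I exploit that $\Ucal_{\Vbb_0}\in\Vbb_0\subset\Vbb$ is admissible as a test function for $\Ucal_\Vbb$, while symmetry of $B$ and the vanishing of the boundary terms on $\Vbb_0\times\Vbb_0$ give $B(\Ucal_{\Vbb_0},\Ucal_{\Vbb_0})=\phi(\Ucal_{\Vbb_0})$. A direct expansion then leaves the clean identity
\[
B(\Ucal_\Vbb - \Ucal_{\Vbb_0},\,\Ucal_\Vbb - \Ucal_{\Vbb_0}) \,=\, \phi(\Ucal_\Vbb - \Ucal_{\Vbb_0}),
\]
whose right-hand side vanishes in the limit by the weak convergence already established; the uniform coercivity from step one converts this into $\|\Ucal_\Vbb - \Ucal_{\Vbb_0}\|\to 0$. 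I expect the main obstacle to be the identification of the weak limit inside $\Vbb_0$: it hinges on the fact that the boundary estimate \emph{improves} from the trivial $O(1)$ to $O(z_+)$ thanks to the $1/z_m$-weight in \eqref{eq:sesq}, which is precisely what allows the Dirichlet-type constraint defining $\Vbb_0$ to survive the weak passage to the limit.
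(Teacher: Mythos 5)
Your proposal is correct and follows essentially the same route as the paper's proof: a uniform a priori bound plus the $O(z_+)$ boundary estimate, weak compactness to identify the limit in $\Vbb_0$ via the variational equation, and the identity $B(\Ucal_\Vbb-\Ucal_{\Vbb_0},\Ucal_\Vbb-\Ucal_{\Vbb_0})=\phi(\Ucal_\Vbb-\Ucal_{\Vbb_0})$ combined with coercivity to upgrade to strong convergence. The only difference is presentational --- the paper phrases the argument as a proof by contradiction while you argue directly through subsequences --- which does not change the substance.
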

\begin{proof}
For clarity, we abbreviate the solutions by $\Vcal(z) = \Ucal_{\Vbb}$ and the bilinear form by $B(z)$. We perform an indirect argument and assume that \eqref{eq:zlimit} is false, i.e., there exists a sequence sequence 
$$
\{\Vcal^{(j)}\}_{j=1}^\infty := \{\Vcal(z^{(j)})\}_{j=1}^\infty, \qquad \lim_{j\to\infty}\Vcal^{(j)} \neq \Ucal_{\Vbb_0}
$$ 
where $ z^{(j)} \to 0$ in $(0,\infty)^M$ as $j\to \infty$. According to the uniform (with respect to $z$) bound \eqref{eq:u-unibound}, the function $z \mapsto \|\Vcal(z)\|$ is bounded. Hence, by the Banach--Alaoglu theorem, we may assume that there exists a $\Vcal\in\Vbb$ such that
$$
\lim_{j\to\infty} \Vcal^{(j)} = \Vcal \qquad \mathrm{weakly \ \, in \ \,} \Hbb^1.
$$
First we show that actually $\Vcal = \Ucal_{\Vbb_0}$. 

Using $z_m \leq z_+$ for any $m=1,2,\ldots,M$ and \eqref{eq:sigma}, we obtain from \eqref{eq:Galerkin} that
\begin{equation}\label{eq:vz}
\sum_{m=1}^M\int_{E_m}|v(z)-V_m(z)|^2\rmd S \leq C z_+ \|\phi\|_{\Vbb'} \to 0 \quad {\rm as} \ \ z_+\to 0.
\end{equation}
Hence the (weak) continuity of the trace operator yields
\[
\sum_{m=1}^M\int_{E_m}|v-V_m|^2\rmd S = \lim_{j\to\infty}\sum_{m=1}^M\int_{E_m}|v^{(j)}-V_m^{(j)}||v-V_m|\rmd S = 0.
\] 
Subsequently, Cauchy--Schwarz inequality and \eqref{eq:vz} give $v = V_m$ on every $E_m$, that is, $\Vcal \in \Vbb_0$. Similarly, by weak convergence, an arbitrary $\Wcal\in\Vbb_0$ satisfies 
\[
\int_\Omega \sigma\nabla v\cdot \nabla {w}\rmd x = \lim_{j\to\infty} \int_\Omega \sigma\nabla v^{(j)}\cdot\nabla {w}\rmd x = \lim_{j\to\infty}B(z^{(j)})(\Vcal^{(j)},\Wcal) = \phi(\Wcal),
\]
where the middle equality is a consequence of the vanishing boundary term. Thus uniqueness guarantees the claimed $\Vcal = \Ucal_{\Vbb_0}$. 

We are ready to derive the contradiction. By coercivity we estimate 
\begin{align}\label{eq:phivj}
\|\Vcal^{(j)}-\Ucal_{\Vbb_0}\|^2 &\leq C |B(z^{(j)})(\Vcal^{(j)}-\Ucal_{\Vbb_0},\Vcal^{(j)}-\Ucal_{\Vbb_0})|\nonumber \\[4pt] 
& = C |\phi(\Vcal^{(j)}-\Ucal_{\Vbb_0})|
\end{align}
with a constant $C > 0$ independent of $ j $. Note that the equality in \eqref{eq:phivj} follows by symmetricity of $\sigma$ and vanishing of suitable boundary terms. Therefore, by weak convergence the right-hand side of \eqref{eq:phivj} converges to zero as $ j\to\infty $. This implies strong convergence for the sequence $ \{\Vcal^{(j)}\}_{j=1}^\infty $ which contradicts the counter-assumption.
\end{proof}

There are certain special cases of Proposition \ref{prop:zGal} that are of particular interest. First of all, in the case $\Vbb = \Hbb^1$ it follows that  that the CEM solution converges to that of the SM with an unspecified rate as $ z \to 0$ in $(0,\infty)^M$. Secondly, any Galerkin approximation of the CEM converges to that of the SM problem as the contact impedances tend to zero.

\section{Regularity and convergence rates in the smooth setting}\label{sec:Reg}

\subsection{Regularity of the spatial part}

In this section we move onwards to study the Sobolev smoothness of the spatial potentials of CEM and SM in the case when all the predetermined attributes are smooth enough. We show in both cases that the $ H^1 $-regularity of the spatial potential is not the optimal. To avoid extra technicality, we assume that $\partial\Omega$, $\partial E_m$, $m=1,2,\ldots,M$ are all in the $\mathscr{C}^\infty$-class. We also suppose that in addition to satisfying \eqref{eq:sigma}, the conductivity $ \sigma$ belongs to $ \mathscr{C}^\infty(\overbar{\Omega};\Rbb^{n\times n}) $. 

At this point, we need to enlarge the domain and range of the co-normal derivative $ \gamma_1(\sigma)u $ in smooth domains. In what follows, we use the generic $ \langle \, \cdot \, ,\, \cdot \, \rangle $ to denote the dual evaluation between any pair $ H^{-s}(\partial\Omega) $ and $ H^s(\partial\Omega) $, $ s\geq 0 $; if there is danger of confusion, we give further specifications. By density (see \cite[\S 2 Theorem 7.3]{Lions72}), the operator $ \gamma_1(\sigma): \mathscr{C}^\infty(\overbar{\Omega}) \to \mathscr{C}^\infty(\partial\Omega) $, $ \varphi \mapsto \nu\cdot\sigma\nabla \varphi|_{\partial\Omega} $ extends to a bounded operator 
\begin{equation}\label{eq:extg}
\gamma_1(\sigma)\colon H_\sigma^s(\Omega) \to H^{s-3/2}(\partial\Omega)
\end{equation}
for any $ 0<s<2 $. 

Next we introduce a reference problem which will be used to infer the extra regularity of $ u,u_0 \in H^1(\Omega) $. Suppose $\Gamma\subset \partial\Omega$ is simply connected, non-empty and that $\partial\Gamma$ is of class $\mathscr{C}^\infty$. Then, it is well known that for any pair of data $ g\in H^{1/2}(\Gamma) $, $ f\in L^2(\Omega) $ and parameter $ \beta \geq 0 $, there exists a unique $ v_\beta \in H^1(\Omega) $ satisfying weakly
\begin{equation}
\label{eq:mixed}
\displaystyle{
{\rm div}(\sigma\nabla v_\beta) = f \quad {\rm in}\;\, \Omega, \quad  
\gamma_1(\sigma) v_\beta = 0 \quad {\rm on}\;\,\partial\Omega\setminus{\Gamma}, \quad  
v_\beta +\beta{\gamma_1(\sigma) v_\beta} = g  \quad {\rm on}\;\, \Gamma
}
\end{equation}
with the case $\beta = 0$ in the rightmost constraint interpreted as a Dirichlet condition. Using the properties of the Dirichlet-to-Neumann map and interpolation of Sobolev spaces, we obtain the following regularity estimate:

\begin{theorem}\label{thm:costabel} The solution $ v_\beta\in H^1(\Omega) $ of \eqref{eq:mixed} satisfies for all $ s \in (-\tfrac{1}{2},\tfrac{1}{2}) $
\begin{equation}\label{eq:beta}
\|v_\beta\|_{H^{1+s}(\Omega)}\leq C \big(\|g\|_{H^{1/2+s}(\Gamma)}+\|f\|_{L^2(\Omega)}\big) 
\end{equation}
with a constant $C>0$ independent of $\beta$.
\end{theorem}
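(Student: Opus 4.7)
My plan is to recast the bulk mixed-boundary problem as a pseudodifferential operator equation on $\Gamma$, so that the $\beta$-uniform regularity estimate reduces to the uniform boundedness of the family of scalar resolvents $(I + \beta S_\Gamma)^{-1}$ on Sobolev spaces of $\Gamma$. A preliminary lifting reduces the general case to $f = 0$: let $w \in H^2(\Omega)$ solve ${\rm div}(\sigma\nabla w) = f$ with $w|_{\partial\Omega} = 0$, so $\|w\|_{H^2(\Omega)} \leq C\|f\|_{L^2(\Omega)}$; after subtracting $w$ and absorbing the resulting inhomogeneous Neumann correction on $\partial\Omega \setminus \Gamma$ by a second elementary lifting, we reach the same problem with $f = 0$ and modified boundary data whose $H^{1/2+s}(\Gamma)$-norm is bounded by $\|g\|_{H^{1/2+s}(\Gamma)} + \|f\|_{L^2(\Omega)}$.

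Next, introduce the Steklov--Poincar\'e operator $S_\Gamma \colon H^{1/2}(\Gamma) \to H^{-1/2}(\Gamma)$ of the mixed Neumann-Dirichlet problem: for $\phi \in H^{1/2}(\Gamma)$, let $u_\phi \in H^1(\Omega)$ be the unique solution of ${\rm div}(\sigma \nabla u_\phi) = 0$, $\gamma_1(\sigma) u_\phi = 0$ on $\partial\Omega \setminus \Gamma$, and $u_\phi|_\Gamma = \phi$, and set $S_\Gamma \phi := \gamma_1(\sigma) u_\phi|_\Gamma$. The cited mixed regularity results (Costabel, Hyv\"onen, Savar\'e) give $\phi \mapsto u_\phi$ bounded $H^{1/2+s}(\Gamma) \to H^{1+s}(\Omega)$ for every $s \in (-\tfrac12, \tfrac12)$, and by duality $S_\Gamma$ is a non-negative, self-adjoint classical pseudodifferential operator of order one on $\Gamma$, bounded $H^{1/2+s}(\Gamma) \to H^{-1/2+s}(\Gamma)$ in the same range. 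Since $v_\beta$ is precisely $u_\phi$ for $\phi = v_\beta|_\Gamma$, the Robin condition becomes the boundary operator equation
\[
(I + \beta S_\Gamma)\phi = g.
\]

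The main obstacle is proving that $(I + \beta S_\Gamma)^{-1}$ is bounded $H^{1/2+s}(\Gamma) \to H^{1/2+s}(\Gamma)$ uniformly in $\beta \geq 0$. Two ingredients are needed. Self-adjointness and positivity of $S_\Gamma$ on $L^2(\Gamma)$ give $\|(I+\beta S_\Gamma)^{-1}\|_{L^2 \to L^2} \leq 1$ via the spectral theorem for every $\beta \geq 0$. Moreover, since $S_\Gamma$ is elliptic of order one on the smooth closed manifold $\Gamma$, the Sobolev scale $H^k(\Gamma)$ for integer $k$ coincides with $\mathrm{Dom}(S_\Gamma^k)$ in the graph norm (after quotienting by its finite-dimensional kernel, if any), and intermediate spaces $H^{1/2+s}(\Gamma)$ are recovered by real interpolation. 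Since $(I + \beta S_\Gamma)^{-1}$ commutes with every function of $S_\Gamma$, functional calculus shows it is a contraction on each $\mathrm{Dom}(S_\Gamma^k)$, and interpolation then delivers the uniform $H^{1/2+s}(\Gamma)$-bound.

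Combining these pieces, $\phi = (I+\beta S_\Gamma)^{-1}g$ satisfies $\|\phi\|_{H^{1/2+s}(\Gamma)} \leq C\|g\|_{H^{1/2+s}(\Gamma)}$ with $C$ independent of $\beta$, and applying the mixed-problem regularity estimate to the identification $v_\beta = u_\phi$ yields the desired bound $\|v_\beta\|_{H^{1+s}(\Omega)} \leq C\bigl(\|g\|_{H^{1/2+s}(\Gamma)} + \|f\|_{L^2(\Omega)}\bigr)$ with constant independent of $\beta$.
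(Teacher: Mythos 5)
Your reduction of the theorem to the boundary operator equation $(I+\beta S_\Gamma)\phi = g$ is exactly the paper's strategy (their $\Lambda$ is your $S_\Gamma$, see \eqref{eq:opeq}), and the $L^2(\Gamma)$-contraction bound $\|(I+\beta S_\Gamma)^{-1}\|_{L^2\to L^2}\leq 1$ via self-adjointness and positivity is sound. The gap is in the step that upgrades this to $H^{1/2+s}(\Gamma)$ for $s\in(0,\tfrac12)$. You treat $\Gamma$ as a ``smooth closed manifold'' on which $S_\Gamma$ is a classical elliptic pseudodifferential operator of order one, and deduce $\mathrm{Dom}(S_\Gamma^k)=H^k(\Gamma)$. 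But $\Gamma$ is a proper subdomain of $\partial\Omega$ with nonempty boundary $\partial\Gamma$, and $S_\Gamma$ is the Dirichlet-to-Neumann map of a \emph{mixed} (Zaremba) problem. Its output develops an $r^{-1/2}$-type singularity at $\partial\Gamma$ even for $\phi\in\mathscr{C}^\infty(\overbar{\Gamma})$, so $S_\Gamma$ does not map $H^1(\Gamma)$ into $L^2(\Gamma)$ and $\mathrm{Dom}(S_\Gamma)$ is a strict subspace of $H^1(\Gamma)$ carrying constraints at the transition points. Consequently the identification $\mathrm{Dom}(S_\Gamma^\theta)=H^\theta(\Gamma)$ obtained by interpolating $L^2$ against the graph-norm domain is exactly the delicate point: it holds for $\theta\leq\tfrac12$ (where $\mathrm{Dom}(S_\Gamma^{1/2})$ is the form domain $H^{1/2}(\Gamma)$), but for $\theta=\tfrac12+s\in(\tfrac12,1)$ --- the range actually needed for the paper's convergence rates --- it cannot be taken for granted and is precisely what the singular-function analysis of Costabel--Dauge is designed to control. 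The paper's own remark that the powers $\Lambda^j$, $j\geq 2$, are not well defined on the Sobolev scale is a symptom of the same phenomenon.

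A quick sanity check confirms something must break: if $S_\Gamma$ really were a classical elliptic operator of order one on a closed manifold, your functional-calculus argument would yield the uniform bound for \emph{every} $s$, whereas the exponent $\tfrac12$ is known to be optimal for the generic two-dimensional mixed problem (footnote 1 of the paper). So your argument covers $s\in(-\tfrac12,0]$ but not the substantive half of the claimed range. To repair it you would have to either characterize $\mathrm{Dom}(S_\Gamma^\theta)$ for $\theta\in(\tfrac12,1)$ directly (which amounts to redoing the Costabel--Dauge analysis), or follow the paper's route: establish the mapping property $({\rm Id}+\beta\Lambda)^{-1}\colon (H^{1/2-s}(\Gamma))'\to H^{1/2+s}(\Gamma)$ with the explicit $\beta$-dependence and interpolate between that and the uniform bound on the form-domain scale, using only the \emph{one-fold} continuity of $\Lambda\colon H^{1/2+s}(\Gamma)\to(H^{1/2-s}(\Gamma))'$ rather than its iterates. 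Your preliminary lifting to reduce to $f=0$ is fine in spirit, though note that subtracting $w$ introduces a Robin correction $-\beta\gamma_1(\sigma)w$ on $\Gamma$, so the resulting data bound carries a factor $\beta$ that is harmless only because $\beta$ ranges over a bounded set in the application.
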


\begin{proof} For simplicity, we assume that $f = 0$ as the below reasoning can be adjusted to the general case with a few simple modifications. The idea is essentially based on the proof of \cite[Corollary 4.4]{Costabel96}. In the case $\beta = 0$, the unique solution of \eqref{eq:mixed} exists and satisfies \cite{Savare97,Costabel96} an estimate
\begin{equation}\label{eq:v0}
\|v_0\|_{H^{1+s}(\Omega)} \leq C \|g\|_{H^{1/2+s}(\Gamma)}
\end{equation}
for all $s\in (-\tfrac{1}{2},\tfrac{1}{2})$. We define the associated (partial) Dirichlet-to-Neumann map by
\begin{equation}\label{eq:Lambda_G}
\Lambda = \Lambda_\Gamma(\sigma) \colon H^{s+1/2}(\Gamma)\to (H^{1/2-s}(\Gamma))', \quad g \mapsto (\gamma_1(\sigma)v_0)|_\Gamma,
\end{equation}
where 
\begin{equation}\label{eq:restr}
(\gamma_1(\sigma)v_0)|_\Gamma : w \mapsto \langle \gamma_1(\sigma)v_0, \widetilde{w}\rangle, \qquad \widetilde{w}\in H^{1/2-s}(\partial\Omega), \ \ \widetilde{w}|_\Gamma = w
\end{equation}
and $v_0$ is the solution of \eqref{eq:mixed} corresponding to $g$ and $f = 0$. Note that as a consequence of the zero-Neumann condition, \eqref{eq:restr} is independent of the choice of the extension $\widetilde{w}$. By the standard characterization of $H^{1/2-s}(\Gamma)$ by restrictions (see \cite[\S 1 Theorem 9.2]{Lions72}), \eqref{eq:extg} and \eqref{eq:v0}, we see that $\Lambda$ is bounded.

Suppose for now that $\beta > 0$ and $v_\beta$ solves \eqref{eq:mixed} for some $g\in L^2(\Gamma)$ and let ${\rm Id} \colon L^2(\Gamma) \to L^2(\Gamma)$ denote the identity map. As a consequence of the fact that $v_\beta$ trivially solves the corresponding mixed Dirichlet/Neumann problem, we can write 
\begin{equation}\label{eq:opeq}
({\rm Id} + \beta\Lambda)(v_\beta|_\Gamma) = g.
\end{equation}
The rest of the proof is analogous to that of \cite[Corollary 4.4]{Costabel96}; it relies on studying continuity properties of the operator\footnote{For the existence of the inverse operator between these spaces, we refer to \cite{Savare97,Costabel96}. }
\[
({\rm Id} + \beta\Lambda)^{-1} \colon (H^{1/2-s}(\Gamma))' \to H^{1/2+s}(\Gamma), \qquad \beta > 0
\] 
for all $s\in(-\tfrac{1}{2},\tfrac{1}{2})$ via interpolation of Sobolev spaces \cite{Lions72} and utilization of the continuity of \eqref{eq:Lambda_G}.
\end{proof}

\begin{remark}
In fact, the interpolation argument of the proof of \cite[Corollary 4.4]{Costabel96} yields also a convergence rate 
$$
\|v_\beta -v_0\|_{H^{1+s}(\Omega)} = O(\beta^{t-s})
$$ 
for any $\smash{t\in [s,\tfrac{1}{2})}$. However, this result is not directly applicable to $\smash{\|u-u_0\|_{H^{1+s}(\Omega)}}$ because in the CEM the electrode boundary data depends on $z$. Moreover, note that we cannot expand $({\rm Id} + \beta \Lambda)^{-1}$ in terms of the Neumann series because powers $\Lambda^j$, $j\geq 2$, are not well defined (cf.~\cite[Theorem 3.1]{Costabel96}).
\end{remark}

The next theorem shows that \eqref{eq:beta} can nevertheless be used to get an analogous norm estimate for $ u,u_0 \in H^1(\Omega) $.

\begin{theorem}\label{thm:sm-smooth}
The functions $ u,u_0 \in H^1(\Omega) $, i.e. the spatial parts of CEM and SM solutions, satisfy
\begin{equation}\label{eq:u-reg}
\|u\|_{H^{1+s}(\Omega)}\leq C \max(\sigma_-^{-1},z_+)|I|, \quad \|u_0\|_{H^{1+s}(\Omega)} \leq C \sigma_-^{-1}|I|
\end{equation} 
for any given $s<\tfrac{1}{2}$.
\end{theorem}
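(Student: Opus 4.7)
The plan is to view both $u$ and $u_0$ as solutions of the reference problem \eqref{eq:mixed} with $\Gamma=E$, and then feed the resulting data into Theorem \ref{thm:costabel}. Concretely, by the characterization of $\Hbb^1_0$ in \eqref{eq:H0}, the SM solution satisfies $u_0|_{E_m}=U_{0,m}$, so $u_0$ solves \eqref{eq:mixed} with $\beta=0$, $f=0$ and Dirichlet datum $g_0$ defined by $g_0|_{E_m}=U_{0,m}$. Likewise, the Robin condition in \eqref{eq:mem} identifies $u$ as the solution of the analogous mixed Neumann/Robin problem with datum $g|_{E_m}=U_m$ and a piecewise constant parameter $\beta$ taking the value $z_m$ on $E_m$. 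Since the electrodes are well-separated with $\mathscr{C}^\infty$ boundaries and $g,g_0$ are locally constant, they lie in $H^{1/2+s}(E)$ for every $s<\tfrac{1}{2}$, with a purely geometric bound
\[
\|g\|_{H^{1/2+s}(E)} \le C\1|U|, \qquad \|g_0\|_{H^{1/2+s}(E)} \le C\1|U_0|.
\]

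The next step is to extend Theorem \ref{thm:costabel} to this setting, namely a $\Gamma$ that is a disjoint union of simply connected pieces and a parameter $\beta$ that is piecewise constant with $0\le\beta\le z_+$. The Dirichlet-to-Neumann map $\Lambda_\Gamma(\sigma)$ defined by \eqref{eq:Lambda_G}--\eqref{eq:restr} makes perfect sense on such a $\Gamma$ (the restriction \eqref{eq:restr} is still independent of the extension thanks to the zero-Neumann condition outside $E$), and the boundary identity \eqref{eq:opeq} generalizes to $(\mathrm{Id}+M_z\Lambda_\Gamma)(u|_E)=U$, where $M_z$ is multiplication by $z_m$ on $E_m$. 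Since $\|M_z\|\le z_+$ uniformly on every Sobolev scale $H^{1/2\pm s}(E)$, the interpolation argument of \cite[Corollary 4.4]{Costabel96} invoked in the proof of Theorem \ref{thm:costabel} delivers a bound
\[
\|u\|_{H^{1+s}(\Omega)} \le C\1\|U\|_{H^{1/2+s}(E)}, \qquad \|u_0\|_{H^{1+s}(\Omega)} \le C\1\|U_0\|_{H^{1/2+s}(E)},
\]
with $C$ independent of $z\in(0,z_+]^M$ for every $s<\tfrac{1}{2}$.

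It then remains to bound $|U|$ and $|U_0|$ in terms of $|I|$. For the CEM, the functional $\phi_I(\Wcal)=I\cdot W$ satisfies $\|\phi_I\|_{(\Hbb^1)'}\le C\1|I|$, so Lemma \ref{lem:Lax-Milgram} gives $|U|\le\|\Ucal\|\le C\max(\sigma_-^{-1},z_+)\1|I|$. For the SM, restricting the bilinear form $B$ to $\Hbb^1_0\times\Hbb^1_0$ makes the boundary integral in \eqref{eq:sesq} vanish (each factor satisfies $w-W_m=0$ on $E_m$), so the corresponding Lax--Milgram constant depends only on the coercivity of $\int_\Omega \sigma\nabla v\cdot\nabla w\rmd x$ on $\Hbb^1_0$, which by a Poincar\'e-type inequality is of order $\sigma_-^{-1}$ with a purely geometric prefactor. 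This yields $|U_0|\le\|\Ucal_0\|\le C\sigma_-^{-1}\1|I|$. Combining these estimates with the inequalities from the previous step produces \eqref{eq:u-reg}.

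The delicate point, and the one I expect to require the most care, is the extension of Theorem \ref{thm:costabel} to a disconnected $\Gamma$ with a \emph{variable} Robin parameter, while keeping the constant uniform in $z$. The scalar-$\beta$ proof hinges on the single-operator identity $(\mathrm{Id}+\beta\Lambda)^{-1}$ being uniformly bounded between interpolated Sobolev spaces; one has to check that replacing $\beta$ by the multiplication operator $M_z$ does not interfere with the interpolation step, which in turn relies on $M_z$ being a bounded, self-adjoint, non-negative multiplier of norm at most $z_+$ on each endpoint of the interpolation scale. Once that is in place, the rest of the argument is essentially bookkeeping.
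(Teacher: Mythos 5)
Your route differs substantively from the paper's. You try to apply the reference problem \eqref{eq:mixed} globally, taking $\Gamma = E$ (a disjoint union of electrodes) and replacing the scalar $\beta$ by the multiplication operator $M_z$. The paper instead localizes first: it constructs a partition of unity $\{\varphi_p\}_{p=1}^M$ with $\varphi_p|_{E_m} = \delta_{pm}$ and $\nu\cdot\sigma\nabla\varphi_p = 0$ on $\partial\Omega$, so that each $u_p = u\varphi_p$ solves a problem of the form \eqref{eq:mixed} with a single connected $\Gamma = E_p$, a genuinely scalar $\beta = z_p$, and a source term $f = 2\sigma\nabla u\cdot\nabla\varphi_p + u\,{\rm div}(\sigma\nabla\varphi_p) \in L^2(\Omega)$ controlled by $\|u\|_{H^1(\Omega)}$. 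Theorem \ref{thm:costabel}, whose estimate \eqref{eq:beta} carries the $\|f\|_{L^2(\Omega)}$ term precisely for this purpose, then applies verbatim to each piece, and the triangle inequality finishes. This is why that theorem is stated only for a simply connected $\Gamma$ and a constant $\beta$.

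The gap in your version is the step you yourself flag as delicate, and it is not mere bookkeeping. The uniform-in-$\beta$ bound behind Theorem \ref{thm:costabel} rests on $({\rm Id}+\beta\Lambda)^{-1}$ being uniformly bounded on the endpoint spaces of the interpolation scale, which follows because $\beta\Lambda$ is a non-negative self-adjoint operator. Replacing $\beta$ by $M_z$ destroys this: $\Lambda$ is nonlocal, so it does not commute with $M_z$, and $M_z\Lambda$ is not self-adjoint. Symmetrizing via $({\rm Id}+M_z\Lambda)^{-1} = M_z^{1/2}\bigl({\rm Id}+M_z^{1/2}\Lambda M_z^{1/2}\bigr)^{-1}M_z^{-1/2}$ restores self-adjointness of the middle factor but introduces a factor of order $(z_+/z_-)^{1/2}$, and the obvious energy estimates produce the same ratio; this is not uniformly bounded when the components of $z$ tend to zero at different rates. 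Since the constant in \eqref{eq:u-reg} must be independent of $z$ (it is fed into Corollary \ref{cor:cem-smooth} and Theorem \ref{thm:zconv} in the limit $z\to 0$), your argument as written only covers the case $z_1=\cdots=z_M$, and even then you still owe a proof that the regularity theory of \cite{Costabel96,Savare97} invoked in Theorem \ref{thm:costabel} extends to a disconnected $\Gamma$. The partition-of-unity localization is exactly the device that removes both obstacles at once. The remaining parts of your proposal --- the Lax--Milgram bounds $|U|\leq C\max(\sigma_-^{-1},z_+)|I|$ and $|U_0|\leq C\sigma_-^{-1}|I|$ --- are fine and agree with what the paper uses in the last line of its proof.
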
 

\begin{proof}
Here we consider only $ u $ ($ u_0 $ can be handled with straightforward modifications). The idea is to use a suitable partition of unity to get local estimates from Theorem \ref{thm:costabel}. We claim that there exists a partition of unity $ \{\varphi_p\}_{p=1}^M \subset \mathscr{C}^2({\Omega}) $ satisfying 
\begin{equation}
\label{eq:1} \sum_{p=1}^M \varphi_p = 1 \ \ {\rm in} \ \ \Omega, \quad \varphi_p|_{E_m} = \delta_{pm}, \quad \nu\cdot\sigma\nabla \varphi_p = 0 \ \ {\rm on} \ \ \partial\Omega.
\end{equation}
 This set functions can be constructed in the following way. First use e.g.~the converse of trace theorem \cite{Necas12} to select functions $ \widehat{\varphi}_p \in \mathscr{C}^2({\Omega}) $ such that $ \widehat{\varphi}_p $ satisfies the latter two conditions of \eqref{eq:1}. Then defining the functions $ \varphi_p \in \mathscr{C}^2({\Omega}) $ by
\[
\varphi_1 := 1 - \sum_{p=2}^M \widehat{\varphi}_p, \quad \varphi_p = \widehat{\varphi}_p, \ \ p = 2,3,\ldots,M,
\] 
gives a set of functions satisfying also the remaining summability condition (recall that $\overbar{E}_m \cap \overbar{E}_p = \varnothing$ if $m\neq p$). We find out that $u_p := u\varphi_p$ satisfies a boundary value problem of form \eqref{eq:mixed}. Obviously we have 
\begin{equation}\label{eq:pdeum}
{\rm div}(\sigma\nabla u_p) = 2\sigma\nabla u \cdot \nabla\varphi_p + u{\rm div}(\sigma\nabla\varphi_p) \in L^2(\Omega)
\end{equation}
in the weak sense, implying the norm estimate $\|{\rm div}(\sigma\nabla u_p)\|_{L^2(\Omega)}\leq C\|u\|_{H^1(\Omega)}$. In addition, a straightforward calculation 
reveals
\[
\langle \gamma_1(\sigma) u_p, {g} \rangle = \langle \gamma_1(\sigma) u, {g}{\varphi}_p \rangle + \int_{\partial\Omega} (\nu\cdot\sigma\nabla \varphi_p){g}{u} \rmd S
\]
for any $g\in H^{1/2}(\partial\Omega)$. Therefore, by \eqref{eq:1} and \eqref{eq:weak-mem} we have
\begin{equation}\label{eq:um}
\gamma_1(\sigma) u_p = -\frac{1}{z_p}(u_p-U_p)\chi_p,
\end{equation}
where $\chi_p$ is the indicator function of $E_p$.
Consequently, Theorem \ref{eq:mixed} can be applied to the boundary value problem defined by \eqref{eq:pdeum} and \eqref{eq:um}. Hence, for any given $s<\tfrac{1}{2}$, the solution $u_p$ satisfies the norm estimate
\begin{align*}
\qquad \|u_p\|_{H^{1+s}(\Omega)} & \leq C \big (\|U_p\chi_p\|_{H^{1/2+s}(E_p)} + \|{\rm div}(\sigma\nabla u_p)\|_{L^2(\Omega)}\big) \\[6pt] 
& \leq C \big( |U| + \|u\|_{H^1(\Omega)}\big )\\[6pt]
& \leq C \max(\sigma_-^{-1},z_+)|I|
\end{align*}
where the middle estimate follows from \eqref{eq:pdeum}. Eventually, using the summability condition of \eqref{eq:1} and triangle inequality, the proof is concluded.
\end{proof}

Thanks to Theorem \ref{thm:sm-smooth} and the regularity of Neumann problem, an extra $\tfrac{1}{2}$ degree of smoothness can be obtained for the CEM solution.

\begin{corollary}\label{cor:cem-smooth}
For an arbitrary $s\in[0,\tfrac{1}{2})$ the function $ u $ belongs to $ H^{3/2+s}(\Omega) $ and moreover  
\begin{equation}\label{eq:cem-smooth}
\|u\|_{H^{3/2+s}(\Omega)} = O( z_-^{-s-\epsilon}) \quad {\rm as} \ \ z_- \to 0
\end{equation}
for any $\epsilon\in(0,1-s)$. Note that the upper bound goes to infinity
as $z_-$ goes to zero.
\end{corollary}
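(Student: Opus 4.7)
The plan is to combine the partition of unity argument from the proof of Theorem~\ref{thm:sm-smooth} with the standard elliptic Neumann regularity (which provides a half-index gain) and a careful interpolation in the Neumann datum. Throughout, the starting point is the uniform-in-$z_-$ estimate $\|u\|_{H^{1+a}(\Omega)} \leq C_a$ for any $a < \tfrac{1}{2}$, already provided by Theorem~\ref{thm:sm-smooth}.

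First, I would reuse the partition of unity $\{\varphi_p\}_{p=1}^M$ constructed in the proof of Theorem~\ref{thm:sm-smooth} and set $u_p := \varphi_p u$. The properties $\nu\cdot\sigma\nabla\varphi_p \equiv 0$ on $\partial\Omega$ and $\varphi_p|_{E_m} = \delta_{pm}$ imply that ${\rm div}(\sigma\nabla u_p) \in L^2(\Omega)$ with norm controlled by $\|u\|_{H^1(\Omega)}$, while the conormal derivative $\gamma_1(\sigma) u_p$ is supported in $E_p$ and equals $-(u-U_p)/z_p$ there. Once its $H^s(\partial\Omega)$-regularity is verified, the standard Neumann regularity estimate on the smooth domain $\Omega$ yields, for any $s \in [0,\tfrac{1}{2})$,
\[
\|u_p\|_{H^{3/2+s}(\Omega)} \leq C\bigl(\|\gamma_1(\sigma) u_p\|_{H^s(\partial\Omega)} + \|u\|_{H^1(\Omega)}\bigr).
\]

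The task then reduces to estimating $\|\gamma_1(\sigma) u_p\|_{H^s(\partial\Omega)}$ in terms of $z_-$. Since $s < \tfrac{1}{2}$, extension by zero from $E_p$ to $\partial\Omega$ is bounded on $H^s$, so it suffices to control $z_p^{-1}\|u - U_p\|_{H^s(E_p)}$. Two endpoint bounds on $\|u - U_p\|$ on $E_p$ are available: the energy identity $B(\Ucal,\Ucal) = I\cdot U$ together with Lemma~\ref{lem:Lax-Milgram} yields $\|u - U_p\|_{L^2(E_p)} \leq C z_p^{1/2}$, while the trace theorem combined with Theorem~\ref{thm:sm-smooth} gives $\|u - U_p\|_{H^{1/2+a}(E_p)} \leq C$ uniformly in $z_-$ for any $a < \tfrac{1}{2}$. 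Interpolating between these two endpoints at level $H^s$ and then dividing by $z_p \geq z_-$ produces an upper bound of the desired form $\|\gamma_1(\sigma) u_p\|_{H^s(\partial\Omega)} \leq C z_-^{-s-\epsilon}$.

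I expect the main obstacle to be pinning down the sharp exponent. The straightforward two-endpoint interpolation between $L^2(E_p)$ and $H^{1/2+a}(E_p)$ saturates (as $a \to \tfrac{1}{2}^-$) at $\epsilon = (1-s)/2$; an alternative optimization, interpolating instead between $H^{1+a_1}(\Omega)$ (bounded uniformly in $z_-$) and the crude $H^{3/2+a_2}(\Omega)$ bound of order $z_-^{-1}$, improves this to any $\epsilon > s$. Reaching arbitrarily small $\epsilon \in (0, 1-s)$, in particular values below $s$ when $s > 0$, would presumably require the finer Sobolev mapping properties of the resolvent $({\rm Id} + \beta\Lambda)^{-1}$ exploited in the proof of Theorem~\ref{thm:costabel}, along the lines of \cite{Costabel96}. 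Summing the resulting local estimates over $p = 1,\ldots,M$ via the triangle inequality applied to $u = \sum_p u_p$ then completes the proof.
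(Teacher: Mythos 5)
Your overall architecture --- localize the Neumann datum to the electrodes, invoke Neumann regularity for the half-index gain from $H^{1+s}$ to $H^{3/2+s}$, and interpolate the datum between two endpoints --- is the same as the paper's. The problem is the choice of endpoints, and you diagnose it correctly yourself: the $L^2(E_p)$--$H^{1/2+a}(E_p)$ interpolation saturates at $\epsilon=(1-s)/2$, and the global bootstrap between $H^{1+a_1}(\Omega)$ and the crude $O(z_-^{-1})$ bound in $H^{3/2+a_2}(\Omega)$ only reaches $\epsilon>s$. Neither covers the full claimed range $\epsilon\in(0,1-s)$, so for $s>0$ the statement is not proved for $\epsilon\in(0,s]$. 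This is a genuine gap, and your conjecture that closing it requires the mapping properties of $({\rm Id}+\beta\Lambda)^{-1}$ points in the wrong direction: the paper's fix is considerably more elementary and does not touch the resolvent at all.

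The missing idea is to take the \emph{lower} interpolation endpoint in a negative-order space rather than in $L^2$. Set $g_m:=z_m^{-1}(U_m-u)|_{E_m}$, which is exactly the Neumann datum on $E_m$. In the dual space $(H^\epsilon(E_m))'$ this quantity is bounded \emph{uniformly in $z$}: one has $\|g_m\|_{(H^\epsilon(E_m))'}\leq C\|\widetilde g_m\|_{H^{-\epsilon}(\partial\Omega)}$, and since $\sum_m\widetilde g_m=\gamma_1(\sigma)u$ with the $E_m$ well separated, the continuity \eqref{eq:extg} of $\gamma_1(\sigma)\colon H^{3/2-\epsilon}_\sigma(\Omega)\to H^{-\epsilon}(\partial\Omega)$ combined with the $z$-uniform bound on $\|u\|_{H^{3/2-\epsilon}(\Omega)}$ from Theorem \ref{thm:sm-smooth} gives $\|g_m\|_{(H^\epsilon(E_m))'}\leq C|I|$ with no power of $z_m$ whatsoever. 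The upper endpoint $\|g_m\|_{H^{1-\epsilon}(E_m)}\leq Cz_m^{-1}$ does cost the full $z_m^{-1}$, but interpolating $H^{s}(E_m)=[(H^{\epsilon}(E_m))',H^{1-\epsilon}(E_m)]_{\theta}$ with $\theta=s+\epsilon$ confines that cost to the power $\theta$, yielding $\|g_m\|_{H^s(E_m)}\leq Cz_m^{-s-\epsilon}$ for arbitrary $\epsilon>0$ (this is \eqref{eq:posteriori}--\eqref{eq:infest} in the paper). In your scheme it is precisely the residual $z$-dependence of the lower endpoint, $\|g_p\|_{L^2(E_p)}\leq Cz_p^{-1/2}$, that pollutes the exponent; replacing $L^2$ by the dual space eliminates it because $g_m$, viewed as a distribution of negative order, \emph{is} the conormal derivative of a $z$-uniformly regular function. (A minor remaining point: Neumann regularity controls $u$ only modulo constants; the paper handles this via the quotient norm $\inf_{c}\|u+c\|_{H^{3/2+s}(\Omega)}$ and removes the constant with \eqref{eq:u-unibound}, while your version with $\|u\|_{H^1(\Omega)}$ on the right-hand side achieves the same, so that part is fine.)
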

\begin{proof} 
We abbreviate 
\[ 
g_m := \frac{1}{z_m}(U_m-u)|_{E_m}\in H^{1/2}(E_m), \qquad m=1,2,\ldots,M
\] 
and let $ \widetilde{g}_m $ denote the extension of $ g_m $ to $ \partial\Omega $ by zero; the continuity of the extension \cite[\S 1 Theorem 7.4]{Lions72} implies
\[
\gamma_1(\sigma) u = \sum_{m=1}^M \widetilde{g}_m \in H^{1/2-\epsilon}(\partial\Omega), \quad \|\gamma_1(\sigma) u\|_{H^{1/2-\epsilon}(\partial\Omega)} \leq C \sum_{m=1}^M \|g_m\|_{H^{1/2-\epsilon}(E_m)}.
\]
By regularity of the Neumann problem \cite[\S 2 Remark 7.2]{Lions72} and interpolation between spaces $H^t(E_m),(H^{t}(E_m))'$ with $t \geq 0$ \cite[\S 1 Theorem 12.5]{Lions72}, we obtain
\[
\begin{split}
\inf_{c\in\Rbb}\|u+c\|_{H^{3/2+s}(\Omega)} \leq C \sum_{m=1}^M\|g_m\|_{H^{s}(E_m)} \leq C \sum_{m=1}^M \|g_m\|_{H^{1-\epsilon}(E_m)}^\theta \|g_m\|_{(H^{\epsilon}(E_m))'}^{1-\theta}
\end{split}
\]
where $ \theta = s +\epsilon $. Since $g_m\in L^2(E_m)$ is identified \cite{Lions72} with $ w\mapsto \int_{E_m}g_m w \rmd S $ in $(H^{\epsilon}(E_m))' $, we have 
$$
 \|g_m\|_{(H^{\epsilon}(E_m))'} \leq C \|\widetilde{g}_m\|_{{H}^{-\epsilon}(\partial\Omega)}. 
$$
Consequently, the fact that the $E_m$ do not overlap each other, the continuity of $\gamma_1(\sigma)$ from $H^{3/2-\epsilon}_\sigma(\Omega)$ to $H^{-\epsilon}(\partial \Omega)$ (see \eqref{eq:extg}), and \eqref{eq:u-reg} together yield
\begin{equation}\label{eq:posteriori}
\inf_{c\in\Rbb}\|u+c\|_{H^{3/2+s}(\Omega)} \leq C |I|^{1-\theta}\sum_{m=1}^M z_m^{-\theta}\|u-U_m\|_{H^{1-\epsilon}(E_m)}^\theta
\end{equation}
with $ \theta = s+ \epsilon $ and $\epsilon\in[0,\tfrac{1}{2})$. By triangle inequality, trace theorem \cite{Lions72} and the fact that $U_m\in\Rbb$ is a constant, we may further estimate the right hand side of \eqref{eq:posteriori} using
\[
\|u-U_m\|_{H^{1-\epsilon}(E_m)} \leq C \big (\|u\|_{H^{3/2-\epsilon}(\Omega)} + |U_m|\big ).
\] 
Thus, by Theorem \ref{thm:sm-smooth} and \eqref{eq:u-unibound}, we get
\begin{equation}\label{eq:infest}
\inf_{c\in \Rbb}\|u+c\|_{H^{3/2+s}(\Omega)} \leq C |I|\sum_{m=1}^M z_m^{-s-\epsilon}.
\end{equation}

In order to manipulate the quotient norm in \eqref{eq:infest}, we recall that by basic properties of Sobolev inner product \cite{Wloka82}, there holds
\[
(w,1)_{H^{3/2+s}(\Omega)} = (w,1)_{L^2(\Omega)} = \int_\Omega w \rmd x
\]
for all $w\in H^{3/2+s}(\Omega)$. Thus we have
\[
\begin{split}
\inf_{c\in\Rbb}\|u+c\|_{H^{3/2+s}(\Omega)}^2 & = \inf_{c\in\Rbb}\left\|u-\frac{1}{|\Omega|}\int_\Omega u\rmd x+c\right\|_{H^{3/2+s}(\Omega)}^2 \\ 
& = \inf_{c\in\Rbb}\left\{ \left\|u-\frac{1}{|\Omega|}\int_\Omega u\rmd x\right\|_{H^{3/2+s}(\Omega)}^2 + c^2|\Omega| \right\} \\ 
& = \left\|u-\frac{1}{|\Omega|}\int_\Omega u\rmd x\right\|_{H^{3/2+s}(\Omega)}^2 \\ 
& = \|u\|_{H^{3/2+s}(\Omega)}^2-\frac{1}{|\Omega|}\left|\int_\Omega u \rmd x\right|^2 \\[8pt]
& \geq \|u\|_{H^{3/2+s}(\Omega)}^2 - \|u\|_{L^2(\Omega)}^2
\end{split}
\]
where the last estimate is a direct consequence of Cauchy--Schwarz inequality. Applying the above estimate, \eqref{eq:infest} and \eqref{eq:u-unibound}, we obtain
\[
\|u\|_{H^{3/2+s}(\Omega)} \leq C \bigg(\|u\|_{L^2(\Omega)} + |I|\sum_{m=1}^M z_m^{-s-\epsilon}\bigg) \leq C |I|\bigg(\max(\sigma_-^{-1},z_+) + \sum_{m=1}^M z_m^{-s-\epsilon}\bigg)
\]
which implies the claim.
\end{proof}

Identifying $\gamma_1(\sigma)u$ with the boundary current density it follows \cite[\S 1 Theorem 9.8]{Lions72} that  
\[
\gamma_1(\sigma)u|_{E_m} \in H^{1+s}(E_m) \subset \mathscr{C}^0(E_m), \qquad s\in(\tfrac{1}{2},1), \ \ \Omega \subset \Rbb^n, \ \ n=2,3.
\]
Therefore, in particular, we have $\gamma_1(\sigma)u \in L^\infty(\partial\Omega)$. This is not true for $\gamma_1(\sigma)u_0$ since it even falls outside of $L^2(\partial\Omega)$.
In the special case $\sigma \equiv 1$, $n=2$ for \eqref{eq:mixed}, the drop in Sobolev regularity was characterized in \cite{Costabel96} (see also \cite{Pidcock95}) by classifying the type of the singularities of $v_\beta$ at the transition points of boundary conditions. Using a singular decomposition technique, in the case $\beta > 0$, it was demonstrated that the most severe singularity is of type $r\log r$ whereas in the case $\beta = 0$ it is $\smash{r^{1/2}}$ with $(r,\theta)$ denoting the polar coordinates centered at the transition point in question. 

\subsection{Convergence rates}

In order to take advantage of the regularity provided by \eqref{eq:u-reg} in deriving convergence rates, we need the following lemma related to the approximation of trivially extended Sobolev functions by bump functions:
\begin{lemma}\label{lem:bump}
Let $ \Gamma\subset \partial\Omega $ be a connected set with a $ \mathscr{C}^\infty $-boundary. Suppose that $ g\in H^s(\Gamma) $ for some $ s\in[0,1/2) $ and denote by $ \widetilde{g} \in L^2(\partial\Omega) $ the extension of $ g $ to $ \partial\Omega $ by zero. Then $ \widetilde{g}\in H^s(\partial\Omega) $ and there exists a sequence of $ \mathscr{C}^\infty(\partial\Omega) $-functions supported in $ \Gamma $ that converges to $ \widetilde{g} $ in $ H^s(\partial\Omega) $.
\end{lemma}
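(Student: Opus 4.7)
The plan is to invoke two classical structural facts about fractional Sobolev spaces on a manifold with boundary, both of which are available in \cite{Lions72}. Namely, for $s\in[0,\tfrac{1}{2})$ the closure $H_0^s(\Gamma)$ of $\mathscr{C}_0^\infty(\Gamma)$ in $H^s(\Gamma)$ coincides with the full space $H^s(\Gamma)$, and the zero extension operator
\[
H_0^s(\Gamma) \to H^s(\partial\Omega), \qquad g \mapsto \widetilde{g},
\]
is bounded. The first of these reflects the well-known fact that for subcritical smoothness indices $s<\tfrac{1}{2}$ there is no trace on $\partial\Gamma$ to respect, so that cutoffs by compactly supported smooth bumps are dense; the second is the corresponding continuity statement for gluing along the smooth seam $\partial\Gamma$.

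Once those two ingredients are in hand the proof is a one-liner. The first claim, $\widetilde{g}\in H^s(\partial\Omega)$, is the boundedness statement applied to $g\in H^s(\Gamma)=H_0^s(\Gamma)$. For the second claim, use density to pick a sequence $\{\varphi_k\}_{k=1}^\infty\subset \mathscr{C}_0^\infty(\Gamma)$ with $\varphi_k \to g$ in $H^s(\Gamma)$. Because each $\varphi_k$ has compact support in the open subset $\Gamma\subset\partial\Omega$, its zero extension $\widetilde{\varphi}_k$ vanishes on a neighbourhood of $\partial\Gamma$ and therefore lies in $\mathscr{C}^\infty(\partial\Omega)$ with support in $\Gamma$. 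The continuity of the extension operator then gives $\widetilde{\varphi}_k \to \widetilde{g}$ in $H^s(\partial\Omega)$, which is exactly the asserted approximating sequence.

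The only obstacle worth flagging is a precise bookkeeping of references: in \cite{Lions72} the equality $H_0^s(\Gamma)=H^s(\Gamma)$ and the continuity of zero extension on it are both contained in the general treatment of $H^s$-spaces on domains (around Theorems~11.1 and 11.4 of Chapter~1), and one must cite them for the subcritical range $s<\tfrac{1}{2}$ only. The endpoint $s=\tfrac{1}{2}$ is genuinely excluded (the analogous statement then fails unless one replaces $H^{1/2}(\Gamma)$ by the Lions--Magenes space $H_{00}^{1/2}(\Gamma)$), which is why the hypothesis of the lemma restricts to $s\in[0,\tfrac{1}{2})$.
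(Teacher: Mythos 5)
Your proposal is correct and follows essentially the same route as the paper: density of $\mathscr{C}_0^\infty(\Gamma)$ in $H^s(\Gamma)$ for $s<\tfrac{1}{2}$ combined with the continuity of the zero-extension operator into $H^s(\partial\Omega)$, both quoted from \cite{Lions72}. The paper cites \S 1, Theorems~11.1 and~7.4 for these two facts, so apart from minor reference bookkeeping there is no difference.
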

\begin{proof}
By the density of compactly supported functions for any Sobolev exponent $ s\in[0,1/2] $ \cite[\S 1 Theorem 11.1]{Lions72}, it is possible to fix a sequence of functions $ (\varphi_j)_{j=1}^\infty \subset \mathscr{C}_0^\infty(\Gamma)$ which converges to $ g $ in the norm of $ H^s(\Gamma) $. As the zero extension $ \widetilde{\varphi}_j $ remains in $\mathscr{C}^\infty(\partial\Omega) $, the continuity of the zero extension operator for $ s\in[0,1/2) $ \cite[\S 1 Theorem 7.4]{Lions72} implies that $ \widetilde{g}\in H^s(\partial\Omega) $ and that the smooth functions $ \widetilde{\varphi}_j $ converge to $ \widetilde{g} $ in $ H^s(\partial\Omega) $.    
\end{proof}

Considering the smoothness given by Theorem \ref{thm:sm-smooth}, the net current condition for the SM can be interpreted in the following way.

\begin{proposition}\label{pr:u0-uz}
In $\mathscr{C}^\infty$-regular geometry $ \Ucal_0 \in \Hbb^1$ satisfies
\begin{equation}\label{eq:I}
\langle \gamma_1(\sigma) u_0 , \chi_m\rangle = I_m, \qquad m=1,2,\ldots,M
\end{equation}
where the dual evaluation can be taken between $H^{-s}(\partial\Omega)$ and $H^s(\partial\Omega)$ for arbitrary $s\in (0,\tfrac{1}{2})$ and $\chi_m$ is the indicator function of $E_m$.
\end{proposition}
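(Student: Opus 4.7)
The strategy is to recover $\langle \gamma_1(\sigma) u_0, \chi_m\rangle$ as a limit of pairings against smooth test functions to which the weak SM formulation can be applied directly. First I would verify that \eqref{eq:I} makes sense: by Theorem \ref{thm:sm-smooth} combined with ${\rm div}(\sigma\nabla u_0) = 0$, the SM spatial potential belongs to $H^{1+s}_\sigma(\Omega)$, so \eqref{eq:extg} gives $\gamma_1(\sigma) u_0 \in H^{s-1/2}(\partial\Omega)$, and Lemma \ref{lem:bump} (applied to $\Gamma = E_m$, $g \equiv 1$, Sobolev exponent $\tfrac{1}{2}-s$) yields $\chi_m \in H^{1/2-s}(\partial\Omega)$, so the $H^{s-1/2} \times H^{1/2-s}$ pairing in the proposition is well defined.

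The key step is the construction of a sequence $\chi_m^{(j)} \in \mathscr{C}^\infty(\partial\Omega)$ with $\chi_m^{(j)}|_{E_p} = \delta_{pm}$ for every $p$ and $\chi_m^{(j)} \to \chi_m$ in $H^{1/2-s}(\partial\Omega)$. I would fix any smooth $\chi_m^{\rm sm} \in \mathscr{C}^\infty(\partial\Omega)$ satisfying $\chi_m^{\rm sm}|_{E_p} = \delta_{pm}$, for example the boundary trace of the partition-of-unity member $\varphi_m$ built in the proof of Theorem \ref{thm:sm-smooth}. The difference $\chi_m^{\rm sm} - \chi_m$ vanishes on $E$ and coincides with $\chi_m^{\rm sm}|_{\partial\Omega\setminus E}$ elsewhere, so it is the zero extension of a smooth function from $\partial\Omega\setminus E$. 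Applying Lemma \ref{lem:bump} componentwise on $\partial\Omega\setminus E$ then produces $g_j \in \mathscr{C}^\infty(\partial\Omega)$ supported in $\partial\Omega\setminus E$ with $g_j \to \chi_m^{\rm sm}-\chi_m$ in $H^{1/2-s}(\partial\Omega)$, and I set $\chi_m^{(j)} := \chi_m^{\rm sm} - g_j$. This is where I expect the main (modest) technical obstacle, as one has to combine the explicit smooth interpolant $\chi_m^{\rm sm}$ with the density statement of Lemma \ref{lem:bump} on the complement while preserving the rigid electrode constraint.

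Finally I would plug these into \eqref{eq:Galerkin}. Taking $W := e_m - \tfrac{1}{M}(1,\ldots,1) \in \Rbb^M_\diamond$ and letting $w^{(j)} \in H^1(\Omega)$ be any $H^1$-extension of the smooth boundary function $\chi_m^{(j)} - \tfrac{1}{M}$, the pair $\Wcal^{(j)} := (w^{(j)},W)$ lies in $\Hbb_0^1$, so the electrode boundary sum in $B(\Ucal_0,\Wcal^{(j)})$ vanishes and the weak SM formulation gives
\[
\int_\Omega \sigma\nabla u_0 \cdot \nabla w^{(j)} \rmd x = I\cdot W = I_m - \tfrac{1}{M}\sum_p I_p = I_m,
\]
where $I \in \Rbb^M_\diamond$ kills the second term. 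By \eqref{eq:gamma} and ${\rm div}(\sigma\nabla u_0) = 0$ the left-hand side equals $\langle \gamma_1(\sigma) u_0, \chi_m^{(j)} - \tfrac{1}{M}\rangle$. Passing $j\to\infty$ by continuity of the duality pairing yields $\langle \gamma_1(\sigma) u_0, \chi_m - \tfrac{1}{M}\rangle = I_m$, and since $\langle \gamma_1(\sigma) u_0, 1\rangle = 0$ follows trivially from \eqref{eq:gamma} with $w\equiv 1$ (the gradient vanishes and ${\rm div}(\sigma\nabla u_0) = 0$), this establishes \eqref{eq:I}.
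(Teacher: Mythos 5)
Your proof is correct and follows essentially the same route as the paper's: both rest on Lemma \ref{lem:bump} to approximate the relevant indicator function in a fractional Sobolev norm by smooth functions compatible with the electrode set, on the continuity of the extended conormal derivative pairing, and on evaluating the result via the variational formulation \eqref{eq:Galerkin}. The only cosmetic difference is that you construct a constrained approximating sequence for $\chi_m$ directly (smooth interpolant minus bump approximations on $\partial\Omega\setminus E$), whereas the paper first establishes the intermediate identity \eqref{eq:chi} for arbitrary smooth $g$ and then specializes to $g$ constant on the electrodes.
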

\begin{proof}
Let $g\in \mathscr{C}^\infty(\partial\Omega)$ and $ s\in (0,\tfrac{1}{2}) $ be arbitrary. According to Lemma \ref{lem:bump} we can pick a sequence $ (\varphi_j)_{j=1}^\infty \subset \mathscr{C}^\infty_0(\partial\Omega\setminus E) $ such that 
\begin{equation}\label{eq:phichi} 
\lim_{j\to\infty}\varphi_j = \chi_{\partial\Omega\setminus E} \quad {\rm in} \quad H^s(\partial\Omega). 
\end{equation} 
By basic properties of Sobolev norm and \eqref{eq:phichi} we have
\[
\lim_{j\to\infty}\|g\varphi_j - g\chi_{\partial\Omega\setminus E}\|_{H^s(\partial\Omega)} \leq \|g\|_{\mathscr{C}^1(\partial\Omega)}\lim_{j\to\infty}\|\varphi_j-\chi_{\partial\Omega\setminus E}\|_{H^s(\partial\Omega)} = 0
\] 
and hence by continuity  
\[
\langle\gamma_1(\sigma) u_0,{g}\chi_{\partial\Omega\setminus E}\rangle = \lim_{j\to\infty}\langle\gamma_1(\sigma) u_0,{g}{\varphi}_j\rangle = 0
\]
where the last equality is a consequence of the variational problem in $ \Hbb_0^1 $ defining $ \Ucal_0 $ (cf. \eqref{eq:Galerkin}) and the fact $ {\rm supp}\, g\varphi_j \subset \partial\Omega\setminus E $.
Therefore, it holds
\begin{equation}\label{eq:chi}
\langle\gamma_1(\sigma) u_0,{g}\rangle = \langle\gamma_1(\sigma) u_0,{g}\chi_E\rangle 
\end{equation}
for any $ g\in \mathscr{C}^\infty(\partial\Omega) $. Choosing suitable test functions $ g $ that are constants on the electrodes, and recalling \eqref{eq:Galerkin} and that the electrodes do not overlap, we arrive at the alleged result. 
\end{proof}

Equation \eqref{eq:I} allows us to estimate $\|\Ucal-\Ucal_0\|$ by using the coercivity of $B$ to obtain the following:

\begin{theorem}\label{thm:zconv}
The discrepancy between the CEM and SM solutions satisfies
\begin{equation}\label{eq:zconv}
\|\Ucal -\Ucal_0\| \leq C |I|z_+^s
\end{equation}
for any $ s \in [0,\tfrac{1}{2}) $, with $C = C(\Omega,E,\sigma) > 0$.
\end{theorem}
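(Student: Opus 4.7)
The plan is to combine coercivity of $B$ (which is $z$-uniform once $z_+ \leq 1/\sigma_-$) with the Galerkin orthogonality $B(\Ucal - \Ucal_0, \Wcal_0) = 0$ for every $\Wcal_0 \in \Hbb^1_0$, which is inherited from the fact that both $\Ucal$ and $\Ucal_0$ solve $B(\cdot, \Wcal_0) = \phi_I(\Wcal_0)$ on $\Hbb^1_0$. This reduces the proof to exhibiting a bound of the form $|B(\Ucal - \Ucal_0, \Ucal - \Ucal_0)| \leq C|I|^2 z_+^{2s}$.

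The orthogonality lets me write $B(\Ucal - \Ucal_0, \Ucal - \Ucal_0) = B(\Ucal - \Ucal_0, \Ucal) = I\cdot U - B(\Ucal_0, \Ucal)$. For the second term, $u_0|_{E_m} = U_{0,m}$ kills the Robin boundary contribution, so $B(\Ucal_0, \Ucal) = \int_\Omega \sigma\nabla u_0 \cdot \nabla u \rmd x = \langle \gamma_1(\sigma)u_0, u\rangle$ by Green's identity and $\mathrm{div}(\sigma\nabla u_0) = 0$. Proposition \ref{pr:u0-uz} localizes this pairing to $E$, and substituting the Robin condition $u|_{E_m} = U_m - z_m\gamma_1(\sigma)u$ yields $B(\Ucal_0, \Ucal) = I\cdot U - \sum_m z_m\langle\gamma_1(\sigma)u_0, \gamma_1(\sigma)u\,\chi_{E_m}\rangle$. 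Rewriting $z_m\gamma_1(\sigma)u = U_m - u$ on $E_m$ produces the central identity
\[
B(\Ucal - \Ucal_0, \Ucal - \Ucal_0) = \sum_{m=1}^M \langle \gamma_1(\sigma)u_0, (U_m - u)\chi_{E_m}\rangle.
\]

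To estimate the right-hand side I fix an auxiliary parameter $s^* \in (s, 1/2)$ and pair via the duality between $H^{s^*-1/2}(\partial\Omega)$ and $H^{1/2-s^*}(\partial\Omega)$. The $u_0$-factor satisfies $\|\gamma_1(\sigma)u_0\|_{H^{s^*-1/2}} \leq C\|u_0\|_{H^{1+s^*}_\sigma(\Omega)} \leq C|I|$ by \eqref{eq:extg} and Theorem \ref{thm:sm-smooth}. Lemma \ref{lem:bump} reduces the $(U_m - u)$-factor to a norm on $E_m$, and I split it by interpolation between $L^2$ and $H^{1/2}$,
\[
\|U_m - u\|_{H^{1/2-s^*}(E_m)} \leq C\|U_m - u\|_{L^2(E_m)}^{2s^*}\|U_m - u\|_{H^{1/2}(E_m)}^{1-2s^*}.
\]
The $H^{1/2}$-part is bounded by $C|I|$ via trace and Lemma \ref{lem:Lax-Milgram}. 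The $L^2$-part is where Corollary \ref{cor:cem-smooth} enters decisively: setting its parameter to $0$ gives $\|u\|_{H^{3/2}(\Omega)} \leq C|I|z_-^{-\epsilon}$, so the extended trace \eqref{eq:extg} places $\gamma_1(\sigma)u$ in $L^2(\partial\Omega)$ with the same bound, whence $\|U_m - u\|_{L^2(E_m)} = z_m\|\gamma_1(\sigma)u\|_{L^2(E_m)} \leq C|I|z_+ z_-^{-\epsilon}$. Absorbing $z_-/z_+$ into the constant, the overall estimate becomes $|B(\Ucal - \Ucal_0, \Ucal - \Ucal_0)| \leq C|I|^2 z_+^{2s^*(1-\epsilon)}$, and choosing $s^*$ close enough to $1/2$ and $\epsilon > 0$ small enough that $s^*(1-\epsilon) \geq s$ concludes the argument.

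The hardest step, conceptually, is recognising that a naive argument---bounding $\|U_m - u\|_{L^2(E_m)}$ only via the $z$-uniform energy estimate $\sum_m z_m^{-1}\|U_m - u\|_{L^2(E_m)}^2 \leq C|I|^2$---yields a rate that is precisely \emph{half} the claimed one, because it gives only $\|U_m - u\|_{L^2(E_m)} = O(z_+^{1/2})$ and hence $\|\Ucal - \Ucal_0\| = O(z_+^{s/2})$. Recovering the full exponent $s$ genuinely relies on the extra half-order of elliptic regularity provided by Corollary \ref{cor:cem-smooth}. A secondary technicality is that Proposition \ref{pr:u0-uz} is stated for smooth test functions and has to be extended to the $H^{1/2+s^*}$-traces appearing in the identity above, which is a routine density/multiplier argument so long as $s^* > 0$.
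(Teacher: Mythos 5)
Your derivation of the central identity
\[
B(\Ucal-\Ucal_0,\Ucal-\Ucal_0)=\sum_{m=1}^M\bigl\langle \gamma_1(\sigma)u_0,(U_m-u)\chi_m\bigr\rangle
\]
via Galerkin orthogonality is equivalent to the paper's (which obtains the same identity from \eqref{eq:BW} by testing with $\Wcal=\Ucal-\Ucal_0$), and your estimation skeleton --- pair in $H^{s^*-1/2}(\partial\Omega)\times H^{1/2-s^*}(\partial\Omega)$, bound the $u_0$-factor by Theorem \ref{thm:sm-smooth} via \eqref{eq:extg}, and interpolate the $(U_m-u)$-factor to extract a power of $z_m$ from the Robin condition --- is also the paper's. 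Your diagnosis of why the naive energy estimate loses half the rate is exactly right.

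There is, however, one step that does not deliver the theorem as stated. You bound the low-regularity endpoint by $\|U_m-u\|_{L^2(E_m)}=z_m\|\gamma_1(\sigma)u\|_{L^2(E_m)}\leq C|I|\,z_m z_-^{-\epsilon}$ using Corollary \ref{cor:cem-smooth}, and then propose to ``absorb $z_-/z_+$ into the constant.'' The constant in \eqref{eq:zconv} must depend only on $(\Omega,E,\sigma)$, while $z_-$ may be arbitrarily small compared with $z_+$ (e.g.\ $z_-=e^{-1/z_+}$), in which case $z_+^{2s^*}z_-^{-2s^*\epsilon}$ is not $O(z_+^{2s})$ for any positive $s$. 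Your argument therefore proves the claim only under an additional comparability hypothesis $z_+\leq Cz_-$. The paper avoids this by never invoking the borderline (and $z$-degenerate) $H^{3/2}$ regularity of $u$: instead of interpolating between $L^2(E_m)$ and $H^{1/2}(E_m)$, it interpolates between $(H^{\epsilon}(E_m))'$ and $H^{1-\epsilon}(E_m)$ (see \eqref{eq:z-8}), bounding the weak endpoint by
\[
\|u-U_m\|_{(H^{\epsilon}(E_m))'}=\|z_m\gamma_1(\sigma)u\|_{H^{-\epsilon}(\partial\Omega)}\leq Cz_m\|u\|_{H^{3/2-\epsilon}(\Omega)}\leq Cz_m|I|,
\]
where the last inequality is the $z$-uniform estimate \eqref{eq:u-reg}. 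Replacing your $L^2$--$H^{1/2}$ pair by this $(H^{\epsilon})'$--$H^{1-\epsilon}$ pair repairs the proof without changing its architecture, so the gap is real but local.
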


\begin{proof} As a consequence of \eqref{eq:I} and \eqref{eq:chi} we write 
\begin{align}\label{eq:BW}
B(\Ucal-\Ucal_0,\Wcal) & = -\int_\Omega \sigma\nabla u_0 \cdot \nabla{w}\rmd x +I\cdot{W}\nonumber\\[4pt]
& = -\sum_{m=1}^M\langle \gamma_1(\sigma)u_0, {w}\chi_m \rangle +\sum_{m=1}^M \langle \gamma_1(\sigma) u_0,{W_m}\chi_m \rangle \nonumber \\
& = -\sum_{m=1}^M \langle \gamma_1(\sigma) u_0, ({w}-{W_m})\chi_m \rangle  
\end{align}
for all $ \Wcal\in\Hbb^1 $ where the middle equality follows from the definition \eqref{eq:gamma} of the conormal derivative. The choice $ \Wcal = \Ucal-\Ucal_0 $ further leads to 
\begin{equation}\label{eq:Bz}
B(\Ucal-\Ucal_0,\Ucal-\Ucal_0) = -\sum_{m=1}^M \langle \gamma_1(\sigma) u_0, ({u}-{U_m})\chi_m \rangle.
\end{equation}
Taking the coercivity of $ B $ into account, it is sufficient to obtain a bound of the desired form for the right hand side of \eqref{eq:Bz}. By the continuity of $ \gamma_1(\sigma) $ we can estimate
\begin{align}\label{eq:z-7}
|\langle \gamma_1(\sigma) u_0, ({u}-{U_m})\chi_m \rangle|  & \leq C \|\gamma_1(\sigma)u_0\|_{H^{t-1/2}(\partial\Omega)} \|(u-U_{m})\chi_m\|_{H^{1/2-t}(\partial\Omega)} \nonumber\\[6pt]
& \leq C \|u_0\|_{H^{1+t}(\Omega)} \|u-U_{m}\|_{H^{1/2-t}(E_m)}
\end{align}
for an arbitrary $t\in (0,\tfrac{1}{2})$. Applying the Robin boundary condition suitably (see proof of Corollary \ref{cor:cem-smooth}), we may use the continuity of $\gamma_1(\sigma)$ to deduce
\[
\|u - U_m\|_{(H^\epsilon(E_m))'} = \|z_m\gamma_1(\sigma)u\|_{H^{-\epsilon}(\partial\Omega)}\leq C z_m \|u\|_{H^{3/2-\epsilon}(\Omega)}
\]
for any $\epsilon \in (0,t+\tfrac{1}{2}]$. Therefore, by interpolation and trace theorem, we get
\begin{align}\label{eq:z-8}
\|u-U_{m}\|_{H^{1/2-t}(E_m)}\nonumber & \leq C \|u-U_{m}\|_{(H^{\epsilon}(E_m))'}^{1-\theta}\|u-U_{m}\|_{H^{1-\epsilon}(E_m)}^\theta \nonumber \\[6pt]
&\leq C z_m^{1-\theta} \|u-U_{m}\|_{H^{3/2-\epsilon}(\Omega)}^\theta\|u\|_{H^{3/2-\epsilon}(\Omega)}^{1-\theta}
\end{align}
with $ \theta = \tfrac{1}{2} +\epsilon -t $. Finally, applying \eqref{eq:u-reg} and taking the square root, we obtain \eqref{eq:zconv} with $ s = \tfrac{1-\theta}{2} = \tfrac{1+2(t-\epsilon)}{4} $.
\end{proof}

Before concluding the section, we point out that convergence rates can be obtained also in other norms. In particular, the next corollary reveals that the electrode voltages $U\in\Rbb_\diamond^M$ converge twice as fast as the potential inside the body.

\begin{corollary}\label{cor:adjoint-z} For the solutions $\Ucal,\Ucal_0\in\Hbb^1$ there holds
\begin{equation}\label{eq:adjoint-z}
\|u-u_0\|_{L^2(\Omega)}+|U-U_0| \leq C |I|z_+^{2s}
\end{equation}
for any $ s\in [0,\tfrac{1}{2}) $. Furthermore, the spatial components satisfy
\begin{equation}\label{eq:zspat}
\|u-u_0\|_{H^{1+s}(\Omega)} \leq C z_+^{1/2-s-\epsilon}
\end{equation}
for any $s\in[0,\tfrac{1}{2})$ and $\epsilon\in (0,1/2-s)$. 
\end{corollary}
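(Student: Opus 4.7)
My plan is to establish the first estimate via an Aubin--Nitsche duality argument and the second via Sobolev interpolation, leveraging Theorem~\ref{thm:zconv} for the energy-level convergence rate and Theorem~\ref{thm:sm-smooth} for the $z$-independent $H^{3/2-\epsilon}(\Omega)$ regularity of $u$ and $u_0$.

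For the $L^2$-plus-voltage bound I would argue by duality. Fix $(f,F)\in L^2(\Omega)\times\Rbb^M_\diamond$ with $\|f\|_{L^2(\Omega)}+|F|\leq 1$ and introduce the adjoint solutions $\Zcal\in\Hbb^1$ and $\Zcal_0\in\Hbb^1_0$ defined by $B(\Wcal,\Zcal)=(w,f)_{L^2(\Omega)}+W\cdot F$ on $\Hbb^1$ and $\Hbb^1_0$ respectively; by symmetry of $B$ these problems have exactly the same structure as the primal CEM/SM pair. The crucial observation is the Galerkin-type orthogonality $B(\Ucal-\Ucal_0,\Wcal)=0$ for every $\Wcal\in\Hbb^1_0$, which holds because both $\Ucal$ and $\Ucal_0$ satisfy $B(\cdot,\Wcal)=I\cdot W$ on that subspace. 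In particular $B(\Ucal-\Ucal_0,\Zcal_0)=0$, which combined with the definition of $\Zcal$ gives
\[
(u-u_0,f)_{L^2(\Omega)}+(U-U_0)\cdot F \,=\, B(\Ucal-\Ucal_0,\Zcal) \,=\, B(\Ucal-\Ucal_0,\Zcal-\Zcal_0).
\]
Since $B$ is symmetric and non-negative, the Cauchy--Schwarz inequality coupled with the energy-type bound from the proof of Theorem~\ref{thm:zconv} applied to both factors yields $|B(\Ucal-\Ucal_0,\Zcal-\Zcal_0)|\leq C|I|\,(\|f\|_{L^2(\Omega)}+|F|)\,z_+^{2s}$. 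Taking the supremum over admissible $(f,F)$ delivers \eqref{eq:adjoint-z}.

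For \eqref{eq:zspat} I would interpolate. Theorem~\ref{thm:sm-smooth} applied to both $u$ and $u_0$ provides $\|u-u_0\|_{H^{3/2-\epsilon'}(\Omega)}\leq C|I|$ uniformly in $z$, while Theorem~\ref{thm:zconv} gives $\|u-u_0\|_{H^1(\Omega)}\leq C|I|z_+^{s_1}$ for any $s_1<\tfrac12$. Sobolev interpolation between $H^1(\Omega)$ and $H^{3/2-\epsilon'}(\Omega)$ then produces
\[
\|u-u_0\|_{H^{1+s}(\Omega)} \,\leq\, C\,\|u-u_0\|_{H^1(\Omega)}^{1-\theta}\|u-u_0\|_{H^{3/2-\epsilon'}(\Omega)}^{\theta}
\]
with $\theta=s/(\tfrac12-\epsilon')$, and optimising over $s_1$ and $\epsilon'$ close to $\tfrac12$ and $0$ respectively gives the effective exponent $s_1(1-\theta)\to\tfrac12-s$, matching \eqref{eq:zspat} up to an arbitrarily small loss $\epsilon$.

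The main technical obstacle is justifying that the argument of Theorem~\ref{thm:zconv} transfers to the adjoint problem. This requires an analog of Proposition~\ref{pr:u0-uz} for $\Zcal_0$ (ensuring $\gamma_1(\sigma)z_0$ is supported on $\overbar{E}$ and satisfies $\langle\gamma_1(\sigma)z_0,\chi_m\rangle=F_m$), together with an inhomogeneous variant of Theorem~\ref{thm:sm-smooth} controlling $\|z\|_{H^{1+t}(\Omega)}+\|z_0\|_{H^{1+t}(\Omega)}$ by $\|f\|_{L^2(\Omega)}+|F|$. Both adaptations follow the same lines as in the excerpt: Theorem~\ref{thm:costabel} is already stated with an $L^2$ source term, and the symmetry of $B$ ensures the adjoint possesses exactly the variational structure exploited in Sections~\ref{sec:Gal} and~\ref{sec:Reg}.
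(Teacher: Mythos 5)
Your proposal is correct, and both halves take routes that differ from the paper's in instructive ways. For \eqref{eq:adjoint-z} the paper also argues by duality with an adjoint solution $\Vcal$, but instead of splitting the cross term by the Cauchy--Schwarz inequality for the positive semidefinite form $B$, it bounds $B(\Ucal-\Ucal_0,\Vcal-\Vcal_0)$ directly: by \eqref{eq:BW} this equals $-\sum_m\langle\gamma_1(\sigma)u_0,(v-V_m)\chi_m\rangle$, and a single application of \eqref{eq:z-7}--\eqref{eq:z-8} to the \emph{dual} solution already produces the full factor $z_m^{1-\theta}=z_m^{2s}$. Your symmetric splitting $|B(\Ucal-\Ucal_0,\Vcal-\Vcal_0)|\leq B(\Ucal-\Ucal_0,\Ucal-\Ucal_0)^{1/2}B(\Vcal-\Vcal_0,\Vcal-\Vcal_0)^{1/2}$ reaches the same exponent because \eqref{eq:Bz} with \eqref{eq:z-7}--\eqref{eq:z-8} gives $B(\Ucal-\Ucal_0,\Ucal-\Ucal_0)\leq C|I|^2z_+^{2s}$ \emph{before} coercivity and the square root are invoked, and likewise for the adjoint error; this is the cleaner textbook Aubin--Nitsche form, at the price of requiring the full energy estimate of Theorem~\ref{thm:zconv} for the adjoint problem rather than only the regularity of $u_0$ and the electrode-trace smallness of the dual solution. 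The adaptations you flag (the analogue of Proposition~\ref{pr:u0-uz} for the adjoint and an inhomogeneous variant of Theorem~\ref{thm:sm-smooth}) are genuinely needed, but the paper needs exactly the same ingredients for the $\|u-u_0\|_{L^2(\Omega)}$ half of \eqref{eq:adjoint-z}, which it likewise dispatches as ``separately but analogously'', so you are not glossing over more than the original does. For \eqref{eq:zspat} your argument is genuinely different and simpler: you interpolate globally between the $H^1(\Omega)$ rate of Theorem~\ref{thm:zconv} and the $z$-uniform $H^{3/2-\epsilon'}(\Omega)$ bound of Theorem~\ref{thm:sm-smooth}, whereas the paper localizes via the partition of unity \eqref{eq:1}, applies the mixed Dirichlet/Neumann estimate of Theorem~\ref{thm:costabel} with $\beta=0$ to the difference, and interpolates only the electrode traces $\|u-U_m\|_{H^{1/2+s}(E_m)}$. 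Both yield the exponent $\tfrac{1}{2}-s-\epsilon$; your version needs only that $H^{1+s}(\Omega)$ is an exact interpolation space between $H^1(\Omega)$ and $H^{3/2-\epsilon'}(\Omega)$, which is standard in the smooth setting assumed here.
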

\begin{proof} 
The first part is proved using a standard ``duality argument'' \cite{Brenner08}. As both error terms in \eqref{eq:adjoint-z} can be handled separately but analogously, it is sufficient to consider the term $|U-U_0|$. Define $ \Vcal \in \Hbb^1 $ as the unique solution to the problem
\begin{equation}\label{eq:adjoint}
B(\Vcal,\Wcal) = J \cdot W \qquad \forall \ \Wcal \in \Hbb^1
\end{equation}
where $J\in \Rbb^M$. In particular, by \eqref{eq:adjoint} and symmetricity we get
\begin{align*}
J \cdot (U-U_0) = B(\Vcal,\Ucal-\Ucal_0) = B( \Ucal-\Ucal_0,\Vcal-\Vcal_0).
\end{align*}
Consequently, expressing the Euclidean norm via supremum 
yields
\begin{equation}\label{eq:squarenorm}
|U-U_0| = \max_{|J| = 1} |B( \Ucal-\Ucal_0,\Vcal-\Vcal_0)|. 
\end{equation}
The idea is to derive a bound for the right-hand quantity without resorting to continuity of $B$. Instead, we will apply Sobolev regularity and interpolation to get 
\begin{equation}\label{eq:idea}
|B( \Ucal-\Ucal_0,\Vcal-\Vcal_0)| \leq C |I||J|z_+^{2s}
\end{equation}
for all $s\in[0,\tfrac{1}{2})$. Combining this with \eqref{eq:squarenorm} then gives \eqref{eq:adjoint-z}.

Let us demonstrate how to obtain \eqref{eq:idea}. Since by \eqref{eq:BW} the modulus of the rightmost expression of \eqref{eq:squarenorm} is bounded by 
\begin{equation}\label{eq:ast}
\sum_{m=1}^M |\langle \gamma_1(\sigma) u_0 ,(v-V_{m},\chi_m \rangle|,
\end{equation}
it is sufficient to find a suitable bound for this quantity. 
According to \eqref{eq:z-7} and \eqref{eq:z-8}, we deduce
\begin{align}\label{eq:2s}
|\langle \gamma_1(\sigma) u_0,(v-V_{m})\chi_m \rangle| &\leq C \|u_0\|_{H^{1+t}(\Omega)}\|v-V_m\|_{H^{1/2-t}(E_m)} \nonumber \\[4pt] & \leq C z_m^{1-\theta}\|u_0\|_{H^{1+t}(\Omega)}\|v-V_m\|_{H^{3/2-\epsilon}(\Omega)}^\theta \|v\|_{H^{3/2-\epsilon}(\Omega)}^{1-\theta} \nonumber \\[4pt]
& \leq C z_m^{1-\theta} |I| |J|
\end{align}
for $t,\epsilon,\theta$ as in the proof of Theorem \ref{thm:zconv}. Combining \eqref{eq:squarenorm}, \eqref{eq:idea} and \eqref{eq:2s} we get
\[
|U-U_0| \leq C |I| z_+^{2s}
\]
where $s = \tfrac{1+2(t-\epsilon)}{4}$ can be chosen freely from the interval $[0,\tfrac{1}{2})$. 

The second part of the claim is again an application of interpolation. Utilizing the partition of unity \eqref{eq:1} with Theorem \ref{thm:costabel} ($\beta = 0$) we get
\[
\begin{split}
\|u-u_0\|_{H^{1+s}(\Omega)} &\leq C \bigg( \|u-u_0\|_{H^1(\Omega)} + \sum_{m=1}^M \|u-u_0\|_{H^{1/2+s}(E_m)} \bigg )\\
& \leq C \bigg( \|\Ucal-\Ucal_0\| + \sum_{m=1}^M\|u-U_m\|_{H^{1/2+s}(E_m)} \bigg)
\end{split}
\]
for any $s\in [0,\tfrac{1}{2})$; note that the bottom estimate is merely based on trivial estimation and the electrode boundary condition of $u_0$. By interpolation (cf. \eqref{eq:z-8}) we further estimate
\[
\|u-U_m\|_{H^{1/2+s}(E_m)} \leq C z_m^{1-\theta}|I|
\] 
with $\theta = \tfrac{1}{2}+s+\epsilon$ and any $\epsilon\in (0,1/2-s)$. The claim is a direct consequence of this since by \eqref{eq:zconv} we can estimate $ \|\Ucal-\Ucal_0\| $ as required.
\end{proof}

To conclude the section, we remark that it is not a difficult task to generalize the above results to the case where $z_m \to 0$ possibly only for $m$ in a subset of $\{1,2,\ldots,M\}$. 

\section{Numerical tests}\label{sec:Num}

We proceed with two numerical examples related to some of the results presented in Sec.~\ref{sec:Gal} and \ref{sec:Reg}. In Sec.~\ref{subsec:Con} we test whether a convergence rate indicated by \eqref{eq:zconv} is apparent in the corresponding FE approximations by piecewise linears. We are also interested whether the numerical electrode voltages converge noticeably faster than the spatial potentials (cf.~\eqref{eq:adjoint-z}). The other example is presented in Sec.~\ref{subsec:FE}. There we numerically study what kind of an effect different values of $ z $ have on the convergence (with respect to the maximal triangle diameter $ h $) of the FE approximation of the CEM. Although this question is of practical interest on its own, it can also be understood as an indirect numerical verification of the observed regularity drop (see \eqref{eq:u-reg}, \eqref{eq:cem-smooth}). 

\subsection{Convergence test}\label{subsec:Con}

In the first numerical example, the test object $ \Omega $ is a regular hexadecagon with all of the 16 corners lying on the unit sphere. The conductivity is constant $ \sigma \equiv 1 $ and there are $M=8$ identical, equidistant electrodes each of which covers exactly one boundary edge of $\Omega$. We compute approximate solutions using the FE method with piecewise linear basis functions. Denoting the space defined by the triangle-wise linear polynomials by $ P_1 $, we select 
$$ 
\Vbb = P_1\oplus \Rbb_\diamond^M; 
$$ 
this Galerkin space is used to compute $ \Ucal_\Vbb $ i.e.~a FE approximation of the CEM. The corresponding $\Vbb_0$ is defined as in \eqref{eq:V0} and we use it to approximate the SM. For a detailed description of the assembly and computation of the system matrices, we refer to \cite{Vauhkonen99}.

\begin{figure}[h!]
\begin{center}
\includegraphics[width=0.45\textwidth]{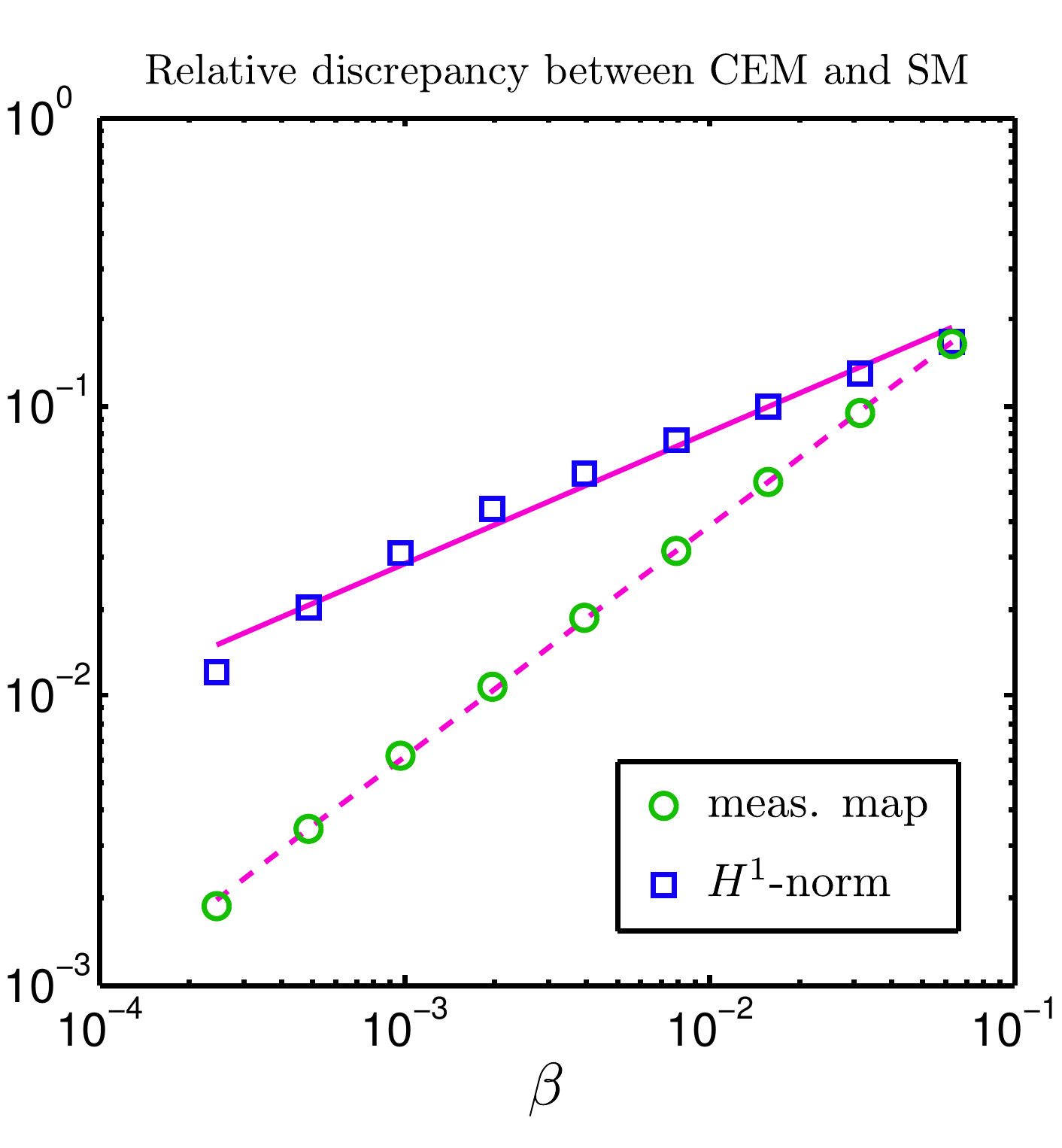}
\raisebox{62pt}{\begin{tabular}{|c|c|}
\multicolumn{2}{c}{} \\ 
\hline
$  {\color{white} \displaystyle{\sum}}\|u_\Vbb-u_{\Vbb_0}\|_{H^1(\Omega)} $ & $O(\beta^{0.4567})$ \\
 \hline
 ${\color{white} \displaystyle{\sum}} \|\Ucal_\Vbb-\Ucal_{\Vbb_0}\| \ \ \ \ \ \, \, \,$ & $ O(\beta^{0.4582}) $\\
 \hline
 ${\color{white} \displaystyle{\sum}} \|u_\Vbb-u_{\Vbb_0}\|_{L^2(\Omega)} $ & $O(\beta^{0.7163})$ \\
\hline
 \qquad ${\color{white} \displaystyle{\sum}} \|R_\Vbb-R_{\Vbb_0}\|_{M\times M} \ \ \  \,\,\, $ & $ O(\beta^{0.8011}) $\\
 \hline
\end{tabular}}
\caption{\label{fig:z} On the left: discrepancy as a function of $z\equiv \beta = {\rm constant}$. On the right: convergence rates in different norms. The error in the measurement map is calculated in the operator norm of $\Rbb^{M\times M}$. All the computations were performed using a fixed triangulation such that away from the boundary the mesh parameter was $ h= 0.079 $ and near the boundary $ h = 0.005 $.}
\end{center}
\end{figure}

In Fig.~\ref{fig:z} the discrepancies between the CEM and SM are visualized as the constant contact impedance 
$$
z = [\beta,\beta,\ldots,\beta]^{\rm T} \to 0 
$$ 
The examined solutions $ \Ucal_\Vbb = \Ucal_\Vbb^{(2)}$ and $ \Ucal_{\Vbb_0} = \Ucal_{\Vbb_0}^{(2)} $ are computed using the input current 
$$
I^{(2)} = [\cos(2\pi m/M)]_{m=1}^M.
$$ 
Similar rates were also obtained for other input currents. We have also considered the measurement matrix 
$$
R_\Vbb\in \Rbb^{M\times M} \quad ( R_{\Vbb_0} \ \, \mathrm{respectively})
$$
defined as the unique matrix having the following two properties:  it maps every $I\in\Rbb_\diamond^M$ to $\smash{U_\Vbb\in\Rbb^M_\diamond}$, where $\smash{\Ucal_\Vbb = (u_\Vbb,U_\Vbb)\in\Hbb^1}$ is the corresponding solution to \eqref{eq:Galerkin}, and its null space is spanned by $\smash{[1,1,\ldots,1]^{\rm T}\in\Rbb^M}$. Note that the $\smash{I^{(2)}}$ defined above is an eigenvector of $R_\Vbb$ ($R_{\Vbb_0}$ respectively) corresponding to the second smallest eigenvalue (see e.g.~\cite{Somersalo92}). 

First of all, we observe that the convergence indicated by Proposition \ref{prop:zGal} appears to take place. The estimated convergence rates in the tabular of Fig. \ref{fig:z} are obtained by a least squares fit of linear functions in $\log \beta$. Although the results fall below the rates predicted by Theorem \ref{thm:zconv} and Corollary \ref{cor:adjoint-z}, it is reasonable to hypothesize that for any $s\in [0,\tfrac{1}{2})$ there exists a fine enough triangulation of $\Omega$ such that an (infinitely precise) numerical scheme will detect the rates
\[ 
\begin{array}{lll}
& \|u-u_0\|_{H^1(\Omega)}  = O(\beta^s), \qquad & \|\Ucal-\Ucal_0\| = O(\beta^s), \\[4pt]
& \|u-u_0\|_{L^2(\Omega)} = O(\beta^{2s}), \qquad & \|R-R_0\|_{M\times M} = O(\beta^{2s}).
\end{array}
\] 
We further observe that qualitatively the obtained estimates are fairly well in accordance with the theory in the sense that $ \|\Ucal_\Vbb - \Ucal_{\Vbb_0}\| \approx O(\beta^{0.4582}) $ is far from linear whereas the error $ \smash{\|R_\Vbb - R_{\Vbb_0}\|_{M\times M} \approx O(\beta^{0.8011}) }$ decays roughly twice as fast in the limit $ \beta \to 0 $.

\subsection{The effect on the convergence of FE approximation}\label{subsec:FE}

We continue working in the same geometry as in the previous example. However, in this case we do not fix the triangulation of $ \Omega $ but instead use a set of gradually sharpening uniform triangulations\footnote{The authors admit that this is not reasonable in practical applications. Due to the high regularity of $ u $ away from $ \partial\Omega $ it is advisable to use adaptive meshing (cf. e.g.~\cite{Hakula12}).} to estimate the convergence rate of the FE approximation by $ \displaystyle{\Vbb = P_1\oplus \Rbb_\diamond^M} $. More precisely, for each member of a set of constant contact impedances $ z \equiv \beta > 0$, we compute for $ \Ucal_\Vbb$ an estimated convergence rate with respect to decreasing mesh parameter $ 0<h\to 0+ $. 

In order to derive a priori error estimates with respect to $ h $, we note that for any given function $ v\in H^{3/2+s}(\Omega) $, $s \in (0,\tfrac{1}{2})$, it can be shown using suitable polynomial interpolator(s) \cite{Brenner08,Stenberg08}, that 
\begin{equation}\label{eq:intp-error}
\inf_{w\in P_1}\|v-w\|_{H^1(\Omega)} \leq C h^{1/2+s}\|v\|_{H^{3/2+s}(\Omega)},
\end{equation}
where the constant $C > 0$ depends on $s$. This and the hypothesis that $ u $ satisfies \eqref{eq:cem-smooth}\footnote{It is well known that in polygonal domains this is not the case e.g.~if the boundary has concave angles. For a detailed discussion on the topic, see for example \cite{Costabel96} and the references therein.} lead to convergence rates with respect to $h$. 
Namely, by C\'ea's lemma and the fact that $ \Vbb = P_1\oplus \Rbb_\diamond^M $, we have
\begin{equation}\label{eq:h1}
\|\Ucal-\Ucal_\Vbb\| \leq C \inf_{\Wcal \in \Vbb}\|\Ucal-\Wcal\| \leq C \inf_{w\in P_1}\|u-w\|_{H^1(\Omega)} \leq C |I|h^{1/2+s}
\end{equation}
with the rightmost constant being of order $ C =  O(\beta^{-1/2-s-\epsilon}) $ for any $ \epsilon \in (0,1-s) $. Moreover, applying a ``dual technique'' as in the proof of \eqref{eq:adjoint-z}, we obtain that
\begin{equation}\label{eq:h2}
\|u-u_\Vbb\|_{L^2(\Omega)}+|U-U_\Vbb| \leq C |I|h^{1+2s}
\end{equation}
with $ C = O(\beta^{-3/2-s-\epsilon}) $. 
Because of the constants' explosion in the limit $ \beta \to 0+ $, one may anticipate that (when using uniform triangulations) the computational detection of rates corresponding to $ s$ close to $1/2 $ becomes increasingly demanding. 

\begin{figure}[h!]
\begin{center}
\includegraphics[width=0.49\textwidth]{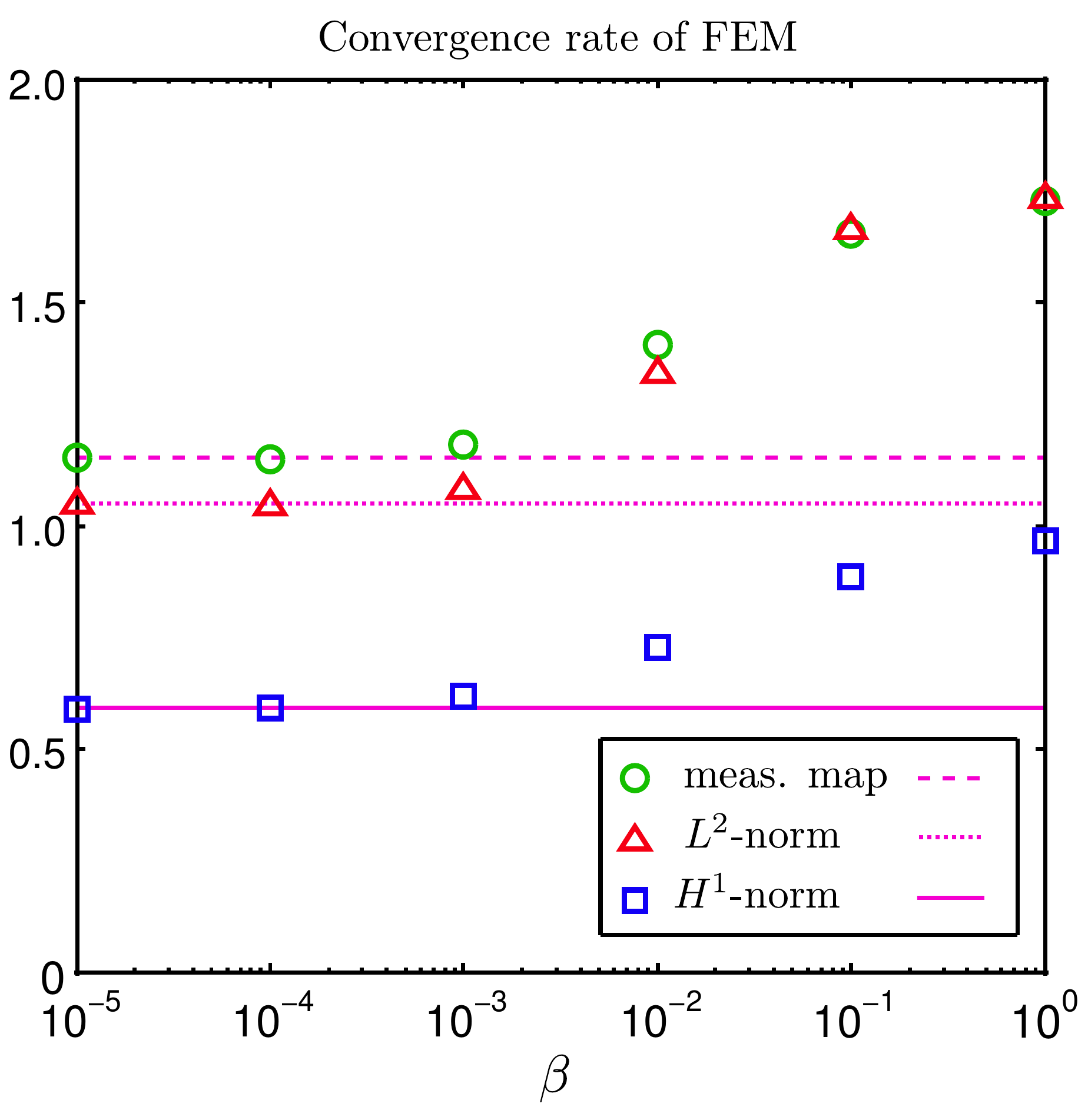} 
\caption{\label{fig:h} Convergence rate of the FE approximation by piecewise linears as a function of $\beta$. On the vertical axis is the estimated (by least squares) slope in $ \log h $. The $L^2$ and $H^1$-errors are computed over $\Omega$ for the interior potential, and the error in the measurement map is measured in the operator norm of $\Rbb^{M\times M}$. The horizontal lines illustrate the respective estimates obtained for the SM (dashed, meas. map; dotted, $L^2(\Omega)$-norm; solid, $H^1(\Omega)$-norm).}
\end{center}
\end{figure}

\begin{remark}
(a) Let $\kappa_h$ denote the condition number of the matrix 
corresponding to the FE discretization of the bilinear form \eqref{eq:sesq}.
A simple computation shows that 
$$
\kappa_h \geq \frac{C(\Omega,h)}{\sigma_+ \beta}
$$
where $C(\Omega,h) > 0$ is a constant independent of $\beta$. Therefore, inversion of the linear system --- and hence the computation of $\Ucal_\Vbb$ --- is ill-conditioned for $\beta$ close to zero. 

(b) 
Implementation of gradient based EIT reconstruction algorithms usually require a numerical approximation of $U'$, i.e., the Fr\'echet derivative of $U$ with respect to a finite dimensional $\sigma$ \cite{Darde13,Heikkinen02}. Since $U'$ depends also on $u$, it is reasonable to use a finer triangulation in the approximation of $\smash{U'}$ than in the simulation of the electrode data $U$ (see \eqref{eq:h1} and \eqref{eq:h2}).
\end{remark}

The exact solution is approximated here by taking $ \hat{\Vbb} $ with a mesh parameter $ \hat{h} $ considerably smaller than those of any of the explored $\Vbb$. In Fig.~\ref{fig:h} the estimated $h$-convergence rates are illustrated in different norms as a function of $\beta$. The applied current inputs are chosen as above in Sec.~\ref{sec:Con}. Again, each one of the estimated rates is obtained from a least squares fit of a linear function in $\log h$. For comparison, the calculation is performed also for the SM case i.e. using $ \Vbb_0 $ and $ \hat{\Vbb}_0 $ as Galerkin spaces, respectively.

\section{Conclusions}\label{sec:Con}

We have demonstrated that the CEM converges to the SM as the contact impedance $z$ tends to zero. The same was also shown for their FE approximations. In smooth domains, we proved that the $H^1$-discrepancy between the CEM and the SM is of the order $O(z^s)$, $0\leq s < \tfrac{1}{2}$. Using a duality argument, it was possible to demonstrate that (in theory) the difference between the corresponding electrode measurement maps is almost linear $O(z^s)$, $0\leq s < 1$. The first numerical experiment verified these rates to a certain extent. We also pointed out that the spatial part of the SM solution has Sobolev regularity of a half degree less than that of the CEM. 
The results of the latter numerical experiment support this drop in regularity, and point out that the FE method gives a more accurate approximation for the CEM when $z \gg 0$. 

\section{Acknowledgements}\label{sec:Acknow}

The authors would like to thank professor Nuutti Hyv\"onen for carefully reading the manuscript and suggesting improvements.

\end{document}